\DeclareMathOperator{\supp}{supp}
\DeclareMathOperator{\dom}{dom}
\DeclareMathOperator{\ordem}{o}
\theoremstyle{definition}
\newtheorem{defin}{Definition}[section]
\newtheorem{prop}[defin]{Proposition}
\newtheorem{lem}[defin]{Lemma}
\newtheorem{teo}[defin]{Theorem}
\newtheorem{cor}[defin]{Corollary}
\newtheorem{prob}[defin]{Problem}
\title[Forcing countably compact group topologies with convergent sequences]{Forcing a classification of non-torsion Abelian groups  of size at most $2^\mathfrak c$ with non-trivial convergent sequences}
	\author[M. K. Bellini]{Matheus Koveroff Bellini}
	\author[V. O. Rodrigues]{Vinicius de Oliveira Rodrigues}
	\author[A. H. Tomita]{Artur Hideyuki Tomita}
	\address{Depto de Matem\'atica, Instituto de Matem\'atica e Estat\'istica, Universidade de S\~ao Paulo, Rua do Mat\~ao, 1010 -- CEP 05508-090, S\~ao Paulo, SP - Brazil}
	\email{tomita@ime.usp.br, matheusb@ime.usp.br, vinior@ime.usp.br}
	\thanks{The first listed author has received financial support from FAPESP 2017/15709-6.}
	\thanks{The second listed author has received financial support from FAPESP 2017/15502-2.}
	\thanks{The third listed author has received financial support from FAPESP 2016/26216-8.}
	\subjclass[2010]{Primary 54H11, 22A05; Secondary 54A35, 54G20.}
\thanks{Keywords: countable compactness, convergent sequences, topological group}
\begin{document}

\maketitle

\begin{abstract} We force a classification of all the Abelian groups of cardinality at most $2^\mathfrak c$ that admit a countably compact group with a non-trivial convergent sequence. In particular, we answer (consistently) Question 24 of Dikranjan and Shakhmatov \cite{dikranjan&shakhmatov3} for cardinality at most $2^{\mathfrak c}$, by showing that if a non-torsion Abelian group of size at most $2^\mathfrak c$ admits a countably compact Hausdorff group topology, then it admits a countably compact Hausdorff group topology with non-trivial convergent sequences.
\end{abstract}

\section{Introduction}

\subsection{Some history}

There are three natural questions concerning countably compact Abelian group without non-trivial convergent sequences that can be asked separately or jointly.

1) What groups admit such topologies?

2) How large they can be?

3) Do they exist in ZFC?

Question 1  was solved under Martin's Axiom in \cite{dikranjan&tkachenko} for Abelian groups of cardinality $\mathfrak c$. This was later improved in \cite{bellini&boero&rodrigues&tomita} under the use of $\mathfrak c$ selective ultrafilters.

Dikranjan and Shakhmatov \cite{dikranjan&shakhmatov2} used a forcing model to classify all Abelian groups of cardinality at most $2^{\mathfrak c}$ that admit a countably compact group topology (without non-trivial convergent sequences).

Question 2 was solved for torsion groups in Castro-Pereira and Tomita \cite{castro-pereira&tomita2010}. They  classified, using some cardinal arithmetic and the existence of a selective ultrafilter $p$, all the torsion groups that admit a $p$-compact group topology (without non-trivial convergent sequences).  This gave the first arbitrarily large countably compact groups without non-trivial convergent sequences. Earlier examples had their size limited to $2^\mathfrak c$

Question 3 is the best known question in the subject. It has been finally answered by M. Hrusak, U. A. Ramos-Garcia, J. van Mill and S. Shelah in \cite{michaelnew}.  The main new ingredient in the ZFC construction is the use of a clever filter which takes care of the combinatorics that guarantee the existence of accumulation points. This new idea has yet two limitations, the construction depends on the use of a group of finite order and the example has cardinality $\mathfrak c$. It is not yet known if the example could be improved to obtain an example of cardinality strictly greater than $\mathfrak c$. These questions appear in their article:

\begin{prob} Are there larger countably compact groups without non-trivial convergent sequences in ZFC?
 \end{prob}
 
 \begin{prob} Is there a torsion-free countably compact group without non-trivial convergent sequences in ZFC?
 \end{prob}
 
 Due to this last question, we are still far from a classification (in ZFC) of countably compact Abelian groups with no convergent sequences of cardinality $ \mathfrak c$. And due to the first question, we are even farther from a classification of group of cardinality $\leq 2^\mathfrak c$ in ZFC. 

To these questions, one can add an interesting question from 
Dikranjan and Shakhmatov \cite{dikranjan&shakhmatov3}, Question 24 whether an infinite group admitting a countably compact Hausdorff group topology can be endowed with a countably compact Hausdorff group topology that contains a non-trivial convergent sequence.
 It is well-known the difficult to construct countably compact groups without nontrivial convergent sequences, but once there was progress to classify them, it became natural to 
ask how hard would it be to add a convergent sequence. They also asked a similar question for pseudocompact groups, which was answered by
Galindo, Garcia-Ferreira and Tomita \cite{galindo&garcia-ferreira&tomita}.

In \cite{galindo&garcia-ferreira&tomita}, it was also noted that, in ZFC, if a torsion Abelian group admits a countably compact group topology then it admits a countably compact group topology with an infinite compact metric subgroup, so, in particular, with a non-trivial convergent sequence. Thus, the real difficult for Dikranjan and Shakhmatov's question is about non-torsion groups.

Using the example in \cite{castro-pereira&tomita2010}, it is consistent that every torsion group admits a countably compact group topology if and only if admits a countably compact group topology without non-trivial convergent sequences if and only if it admits a countably compact group topology with a non-trivial convergent sequence.

Boero, Garcia-Ferreira and Tomita \cite{boero&garcia-ferreira&tomita} showed that the existence of ${\mathfrak c}$ selective ultrafilters implies that the free Abelian group of size $\mathfrak{c}$ admits a  group topology that makes it countably compact with a non-trivial convergent sequence. Bellini,  Boero, Castro-Pereira, Rodrigues and Tomita \cite{bellini&boero&castro&rodrigues&tomita} showed from $\mathfrak p = \mathfrak c$ that every non-torsion Abelian group of cardinality $\mathfrak c$ that admits a countably compact group topology also admits one which in addition has a convergent sequence. Bellini, Boero, Rodrigues and Tomita \cite{bellini&boero&rodrigues&tomita} showed from the existence of $2^\mathfrak c$ selective ultrafilters that every Abelian group of cardinality $\mathfrak c$ as above admits $2^\mathfrak c$ non-homeormorphic such topologies with or without convergent sequences and  any given weight of cardinality between $\mathfrak c$ and $2^\mathfrak c$. It also provided a countably compact group topology, one with and other without non-trivial convergent sequences,  for some (but not all) non-torsion groups of cardinality $2^\mathfrak c$.

In their forcing model, Dikranjan and Shakhmatov \cite{dikranjan&shakhmatov2}
showed a non-torsion Abelian group of cardinality at most $2^\mathfrak c$ admits a countably compact group topology (without non-trivial convergent sequences) if and only if the free rank of $G$ is $\geq\mathfrak{c}$ and, for all $d, n \in \mathbb{N}$ with $d \mid n$, the group $dG[n]$ is either finite or has cardinality $\geq \mathfrak{c}$. Forcing is used to prove the `only if' of the equivalence, whereas the other implication holds in ZFC.

We will use here the ZFC implication. In the forcing examples to produce groups without non-trivial convergent sequences it is used some ideas closely related to the construction of HFD groups. We have to proceed differently to make a sequence converge.

\subsection{Basic results, notation and terminology}

We recall that a $T_1$ topological space is \emph{countably compact} if every infinite subspace has an accumulation point in the space.

The following definition was introduced in \cite{bernstein} and is closely related to countable compactness.

\begin{defin}\label{def_p-limit}
Let $p$ be a free ultrafilter on $\omega$ and let $s:\omega\rightarrow X$ be a sequence in a topological space $X$. We say that $x \in X$ is a \emph{$p$-limit point} of $s$ if, for every neighborhood $U$ of $s$, $\{n \in \omega : s(n) \in U\} \in p$.

We say a topological space is $p$-compact if every sequence into it has a $p$-limit point.
\end{defin}

 If $X$ is a Hausdorff space a sequence $s$ has at most one $p$-limit point $x$ and we write $x = p$-$\lim s$.

The set of all free ultrafilters on $\omega$ will be denoted by $\omega^{*}$. It is not difficult to show that a $T_1$ topological space $X$ is countably compact if and only if, each sequence in $X$ there exists $p \in \omega^*$ such that $s$ has a $p$-limit point. There are several similarities between the theories of limits of sequences and of $p$-limits.

\begin{prop}\label{prop_p-limit_product}
If $p \in \omega^{*}$ and $(X_i : i \in I)$ is a family of topological spaces, then $(y_{i})_{i \in I} \in \prod_{i \in I} X_i$ is a $p$-limit point of a sequence $((x_{i}^{n})_{i \in I} : n \in \omega)$ in $\prod_{i \in I} X_i$ if, and only if, $y_i = p-\lim (x_{i}^{n} : n \in \omega)$ for every $i \in I$.\qed
\end{prop}

The following proposition is straightforward to prove. A proof can be found in \cite{hindman2011algebra} (Theorem 3.54).

\begin{prop}
	Let $X$, $Y$ be topological spaces and $f:X\rightarrow Y$ be a continuous function, $s:\omega\rightarrow X$ be a sequence in $X$ and $p \in \omega^*$. It follows that if $x=p$-$\lim (s_n: n \in \omega)$, then $f(x)=p$-$\lim (f(s_n): n \in \omega)$.\qed
\end{prop}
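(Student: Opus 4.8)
The plan is to verify the conclusion directly from Definition \ref{def_p-limit}, using nothing beyond the definition of continuity and the fact that a free ultrafilter is in particular a filter, hence closed under supersets. The only idea involved is to pull neighborhoods of $f(x)$ back through $f$ and then transfer the membership-in-$p$ condition along the obvious set inclusion.

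Concretely, I would fix an arbitrary neighborhood $V$ of $f(x)$ in $Y$ and choose an open set $V'$ with $f(x) \in V' \subseteq V$. Continuity of $f$ gives that $f^{-1}(V')$ is open in $X$, and $x \in f^{-1}(V')$ since $f(x) \in V'$, so $f^{-1}(V')$ is a neighborhood of $x$. Because $x = p$-$\lim(s_n : n \in \omega)$, Definition \ref{def_p-limit} yields $A := \{ n \in \omega : s_n \in f^{-1}(V') \} \in p$. Now if $s_n \in f^{-1}(V')$ then $f(s_n) \in V' \subseteq V$, so $A \subseteq \{ n \in \omega : f(s_n) \in V \}$; since $p$ is a filter, this larger set also belongs to $p$. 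As $V$ was an arbitrary neighborhood of $f(x)$, this is exactly the assertion that $f(x)$ is a $p$-limit point of $(f(s_n) : n \in \omega)$, and in the Hausdorff case one may then write $f(x) = p$-$\lim(f(s_n) : n \in \omega)$.

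There is essentially no obstacle here: the argument is the exact analogue of the classical fact that continuous maps preserve limits of convergent sequences, and it is recorded as Theorem 3.54 of \cite{hindman2011algebra}. The only point deserving a moment's care is that a neighborhood $V$ of $f(x)$ need not itself be open, which is why I first shrink it to an open $V' \subseteq V$ before applying $f^{-1}$; everything after that is a one-line set inclusion combined with the upward closure of the ultrafilter.
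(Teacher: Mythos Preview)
Your argument is correct and is precisely the standard verification from Definition~\ref{def_p-limit}; the paper does not actually write out a proof but simply cites Theorem~3.54 of \cite{hindman2011algebra}, so your proposal matches (and spells out) what the paper leaves to that reference.
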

Since $+$ and $-$ are continuous functions in topological groups, it follows from the two previous propositions that:
\begin{prop}\label{prop_p-limit}
Let $G$ be a topological group and $p \in \omega^{*}$.

\begin{enumerate}

  \item If $(x_n : n \in \omega)$ and $(y_n : n \in \omega)$ are sequences in $G$ and $x, y \in G$ are such that $x = p-\lim(x_n : n \in \omega)$ and $y = p-\lim(y_n : n \in \omega)$, then $x + y = p-\lim (x_n + y_n : n \in \omega)$;

  \item If $(x_n : n \in \omega)$ is a sequence in $G$ and $x \in G$ is such that $x = p-\lim(x_n : n \in \omega)$, then $- x = p-\lim(- x_n : n \in \omega)$.\qed

\end{enumerate}
\end{prop}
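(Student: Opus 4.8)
The plan is to obtain both statements directly from the two preceding propositions, as the lead-in sentence indicates. For part (1), I would first view the two sequences as a single sequence $z_n = (x_n, y_n)$ in the product group $G \times G$ carrying the product topology. By Proposition \ref{prop_p-limit_product}, the hypotheses $x = p$-$\lim(x_n : n \in \omega)$ and $y = p$-$\lim(y_n : n \in \omega)$ are exactly equivalent to $(x,y)$ being a $p$-limit point of $(z_n : n \in \omega)$. Since $G$ is a topological group, the addition map $a : G \times G \to G$, $a(u,v) = u+v$, is continuous, so the preceding proposition (continuous images of $p$-limit points are $p$-limit points of the images) applied to $a$ and $(z_n : n \in \omega)$ yields that $a(x,y) = x+y$ is a $p$-limit point of $(a(z_n) : n \in \omega) = (x_n + y_n : n \in \omega)$. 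Writing this as $x + y = p$-$\lim(x_n + y_n : n \in \omega)$ is then legitimate whenever $G$ is Hausdorff, since in that case $p$-limit points are unique.

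For part (2) the argument is shorter still: the inversion map $\iota : G \to G$, $\iota(u) = -u$, is continuous in a topological group, so applying the same preceding proposition to $\iota$ and the sequence $(x_n : n \in \omega)$ gives at once that $-x = \iota(x)$ is a $p$-limit point of $(\iota(x_n) : n \in \omega) = (-x_n : n \in \omega)$, i.e. $-x = p$-$\lim(-x_n : n \in \omega)$.

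I do not expect any real obstacle here; the only points requiring a moment's attention are using the correct (product) topology on $G \times G$ when invoking Proposition \ref{prop_p-limit_product} and keeping the same ultrafilter $p$ throughout, both of which are automatic. If one preferred to avoid Proposition \ref{prop_p-limit_product} entirely, part (1) can also be done by hand: given a neighborhood $W$ of $x+y$, continuity of addition produces neighborhoods $U \ni x$, $V \ni y$ with $U + V \subseteq W$; then $\{n : x_n \in U\} \in p$ and $\{n : y_n \in V\} \in p$, and since $p$ is a filter their intersection — which is contained in $\{n : x_n + y_n \in W\}$ — also belongs to $p$. This makes transparent that only the filter axioms are used, with no appeal to selectivity or compactness.
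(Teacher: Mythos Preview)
Your proposal is correct and is precisely the argument the paper has in mind: the lead-in sentence ``Since $+$ and $-$ are continuous functions in topological groups, it follows from the two previous propositions that:'' is the paper's entire proof, and you have unpacked exactly this, using Proposition~\ref{prop_p-limit_product} for the product and the continuity-of-$p$-limits proposition for the maps $+$ and $-$. The alternative direct filter argument you sketch for part~(1) is also fine and is just the elementary version of the same idea.
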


A \emph{pseudointersection} of a family $\mathcal{G}$ of sets is an infinite set that is almost contained in every member of $\mathcal{G}$. We say that a family $\mathcal{G}$ of infinite sets has the \emph{strong finite intersection property} (SFIP, for short) if every finite subfamily of $\mathcal{G}$ has infinite intersection. 

We denote the set of positive natural numbers by $\mathbb{N}$, the integers by $\mathbb{Z}$, the rationals by $\mathbb{Q}$ and the reals by $\mathbb{R}$. The unit circle group $\mathbb{T}$ will be identified with the metric group $(\mathbb{R} / \mathbb{Z}, \delta)$ where $\delta$ is given by $\delta(x + \mathbb{Z}, y + \mathbb{Z}) = \min\{|x - y + a| : a \in \mathbb{Z}\}$ for every $x, y \in \mathbb{R}$. Given a subset $A$ of $\mathbb{T}$, we will denote by $\delta(A)$ the diameter of $A$ with respect to the metric $\delta$. The set of all non-empty open arcs of $\mathbb{T}$ will be denoted by $\mathcal{B}$.

Let $X$ be a set and $(G, +, 0)$ be a group. We denote by $G^{X}$ the product $\prod_{x \in X} G_x$ where $G_x = G$ for every $x \in X$. The \emph{support} of $g \in G^{X}$ is the set $\{x \in X : g(x) \neq 0\}$, which will be designated as $\supp g$. The set $\{g \in G^{X} : |\supp g| < \omega\}$ will be denoted by $G^{(X)}$. If $f:\omega\rightarrow G^{(X)}$ is a sequence, then $\supp f=\bigcup_{n \in \omega}\supp f(n)$.

The torsion part $T(G)$ of an Abelian group $G$ is the set $\{x \in G : nx = 0 \ \hbox{for some} \ n \in \mathbb{N}\}$. Clearly, $T(G)$ is a subgroup of $G$. For every $n \in \mathbb{N}$, we put $G[n] = \{x \in G : nx = 0\}$. In the case $G = G[n]$, we say that $G$ is \emph{of exponent $n$} provided that $n$ is the minimal positive integer with this property. The order of an element $x \in G$ will be denoted by $\ordem(x)$.

A non-empty subset $S$ of an Abelian group $G$ is said to be \emph{independent} if $0 \not \in S$ and, given distinct elements $s_1, \ldots, s_n$ of $S$ and integers $m_1, \ldots, m_n$, the relation $m_1 s_1 + \ldots + m_n s_n = 0$ implies that $m_i s_i = 0$, for all $i$. The free rank $r(G)$ of $G$ is the cardinality of a maximal independent subset of $G$ such that all of its elements have infinite order. It is easy to verify that $r(G) = |G / T(G)|$ if $r(G)$ is infinite.

An Abelian group $G$ is called \emph{divisible} if, for each $g \in G$ and each $n \in \mathbb{N} \setminus \{0\}$, there exists $x \in G$ such that $nx = g$. If $n \in \mathbb{N}$, we denote by $G[n]$ the set $\{x \in G : nx = 0\}$.

The proof of the next three results are well known basic results of the theory of divisible groups can be found in \cite{robinson}.

\begin{prop}
Let $G$ be an Abelian group, $H$ be a subgroup of $G$, $\tilde{G}$ be a divisible group and $f: H \to \tilde{G}$ be a group homomorphism. There exists a group homomorphism $F: G \to \tilde{G}$ such that $F \hspace*{-0.1cm} \upharpoonright_{H} = f$.\qed
\end{prop}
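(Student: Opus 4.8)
The plan is to reduce this to the classical fact that a divisible Abelian group is an injective object, i.e.\ to build $F$ by a Zorn's Lemma argument that extends $f$ one element at a time, using divisibility of $\tilde G$ to clear the obstruction at each step.

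First I would let $\mathcal P$ be the set of all pairs $(K, g)$ where $K$ is a subgroup of $G$ with $H \subseteq K$ and $g : K \to \tilde G$ is a group homomorphism with $g \upharpoonright_H = f$. Since $(H, f) \in \mathcal P$, this set is nonempty; partially order it by $(K_1, g_1) \preceq (K_2, g_2)$ iff $K_1 \subseteq K_2$ and $g_2 \upharpoonright_{K_1} = g_1$. Given a chain $\mathcal C$ in $\mathcal P$, the pair $(\bigcup\{K : (K,g) \in \mathcal C\}, \bigcup\{g : (K,g) \in \mathcal C\})$ is again in $\mathcal P$: the union of a chain of subgroups is a subgroup, and the union of the graphs is a well-defined homomorphism because any two (or finitely many) elements already lie in a common member of the chain. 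So every chain has an upper bound, and Zorn's Lemma produces a maximal element $(K, g)$.

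Next I would show $K = G$. Suppose not and pick $x \in G \setminus K$; put $K' = K + \langle x \rangle$. If $\langle x \rangle \cap K = \{0\}$, then each element of $K'$ is uniquely $k + mx$ with $k \in K$, $m \in \mathbb Z$, and $g'(k + mx) := g(k)$ is a homomorphism on $K'$ extending $g$. Otherwise let $n$ be the least positive integer with $nx \in K$; by divisibility of $\tilde G$ choose $y \in \tilde G$ with $ny = g(nx)$, and set $g'(k + mx) := g(k) + my$. The one point to verify is well-definedness: if $k_1 + m_1 x = k_2 + m_2 x$ then $(m_1 - m_2)x = k_2 - k_1 \in K$, so $n \mid m_1 - m_2$, say $m_1 - m_2 = nt$; then $g(k_2) - g(k_1) = g((m_1-m_2)x) = t\,g(nx) = tny = (m_1-m_2)y$, which rearranges to $g(k_1) + m_1 y = g(k_2) + m_2 y$. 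In either case $g'$ is a homomorphism on $K'$ properly extending $g$ (since $x \in K' \setminus K$), contradicting maximality of $(K,g)$. Hence $K = G$ and $F := g$ works.

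The only mildly delicate step is the well-definedness check of the single-element extension in the second case; the rest is routine verification that unions of chains of subgroups and of compatible homomorphisms behave as expected. I do not anticipate a genuine obstacle here — this is exactly the standard proof that divisible Abelian groups are injective, and the divisibility hypothesis is used precisely to produce the element $y$ with $ny = g(nx)$.
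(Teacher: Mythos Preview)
Your argument is correct: this is the standard Zorn's Lemma proof that divisible Abelian groups are injective in the category of Abelian groups. One small remark: in your Case~1 the representation $k+mx$ is unique only because the hypothesis $\langle x\rangle\cap K=\{0\}$ forces $x$ to have infinite order (if $x$ had finite order $d$, then $dx=0\in K$ and you would already be in Case~2 with $n\le d$); so your claim of uniqueness there is in fact justified.

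As for comparison with the paper: the paper does not prove this proposition at all. It is listed among three ``well known basic results of the theory of divisible groups'' and simply cited from Robinson's textbook, with the statement closed by a \qedsymbol. Your proof is exactly the argument one finds in that reference (and in essentially every graduate algebra text), so there is no meaningful difference in approach to discuss.
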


The group $\mathbb{Q} / \mathbb{Z}$  is called the \emph{quasicyclic group}. 

\begin{teo}\label{teo_classificacao_grupos_divisiveis}
An Abelian group is divisible if and only if, it is isomorphic to a direct sum of copies of $\mathbb{Q}$ and of quasicyclic groups.\qed
\end{teo}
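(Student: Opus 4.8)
The plan is to deal with the ``if'' direction in essentially one line and then obtain the ``only if'' direction through three successive reductions, all resting on the principle that a divisible subgroup is a direct summand.

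For the ``if'' direction: $\mathbb{Q}$ is divisible because one can literally divide; any homomorphic image of a divisible group is divisible (so $\mathbb{Q}/\mathbb{Z}$, and each quasicyclic group $\mathbb{Z}(p^{\infty})$, the $p$-primary component of $\mathbb{Q}/\mathbb{Z}$, is divisible); and an arbitrary direct sum $\bigoplus_{i\in I}D_i$ of divisible groups is divisible, since any of its elements has finite support and division may be carried out on each of the finitely many nonzero coordinates. For the ``only if'' direction, let $G$ be divisible. \textbf{Step 1 (splitting off the torsion part).} First note that $T(G)$ is divisible: if $x\in T(G)$ with $nx=0$ and $m\in\mathbb{N}$, pick $y\in G$ with $my=x$; then $mny=nx=0$, so $y\in T(G)$. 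Now apply the preceding Proposition on extending homomorphisms into divisible groups to the subgroup $T(G)\leq G$ and the identity $\mathrm{id}\colon T(G)\to T(G)$: this yields a retraction $r\colon G\to T(G)$, and hence $G=T(G)\oplus\ker r$, with $\ker r\cong G/T(G)$ torsion-free and divisible (a direct summand, equivalently a homomorphic image, of a divisible group is divisible).

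\textbf{Step 2 (the torsion-free part).} A torsion-free divisible group is a vector space over $\mathbb{Q}$: torsion-freeness makes the solution $y$ of $ny=x$ unique, so multiplication by $\tfrac1n$ is a well-defined scalar action; choosing a basis exhibits $\ker r$ as a direct sum of copies of $\mathbb{Q}$. \textbf{Step 3 (the torsion part).} The group $T(G)$ is the internal direct sum $\bigoplus_p T_p$ of its primary components $T_p=\{x\in T(G):p^kx=0 \text{ for some }k\}$ (a standard Bézout argument), and each $T_p$ is again divisible by the computation of Step 1. It remains to show a divisible $p$-group $D$ is a direct sum of copies of the quasicyclic group $\mathbb{Z}(p^{\infty})$. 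The socle $D[p]$ is a vector space over the field with $p$ elements; fix a basis $(e_i:i\in I)$. For each $i$, use divisibility repeatedly to choose $x_{i,1}=e_i$ and $x_{i,k+1}$ with $px_{i,k+1}=x_{i,k}$; then $C_i:=\langle x_{i,k}:k\geq 1\rangle\cong\mathbb{Z}(p^{\infty})$ and $C_i[p]=\langle e_i\rangle$. The subgroup $\sum_{i\in I}C_i$ is divisible, hence again a direct summand: $D=\big(\sum_i C_i\big)\oplus E$. Since $E\cap\sum_i C_i=0$ while $E[p]\subseteq D[p]=\big(\sum_i C_i\big)[p]$, we get $E[p]=0$, and as $E$ is a $p$-group this forces $E=0$. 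Finally $\sum_i C_i$ is a \emph{direct} sum: a nontrivial relation $x_{i_1}+\dots+x_{i_k}=0$ with each $x_{i_j}\in C_{i_j}$ nonzero, multiplied by $p^{m-1}$ where $p^m$ is the largest of their orders, produces a nontrivial linear dependence among the $e_{i_j}$ over the field with $p$ elements, a contradiction. Assembling Steps 1--3 gives the asserted decomposition.

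The only genuinely load-bearing ingredient is the fact that a divisible subgroup is always a direct summand, and this is precisely what the extension Proposition stated above delivers; once it is in hand, the one spot that requires care is the verification in Step 3 that the quasicyclic subgroups $C_i$ form an independent family, which is the technical heart of the classification.
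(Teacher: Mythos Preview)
The paper does not actually prove this theorem: it is listed among ``well known basic results of the theory of divisible groups'' whose proofs ``can be found in \cite{robinson}'', and is marked \qed\ without argument. Your proof is correct and is precisely the standard textbook argument (as in Robinson or Fuchs): split off the torsion part using the injectivity/extension property of divisible groups, identify the torsion-free divisible summand as a $\mathbb{Q}$-vector space, and decompose the divisible $p$-primary components via their socles into copies of $\mathbb{Z}(p^{\infty})$.

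Two small remarks. First, when you say ``each $T_p$ is again divisible by the computation of Step~1,'' Step~1 literally only shows $T(G)$ is divisible; to see $T_p$ is divisible you need one more line (e.g.\ if $my=x\in T_p$ with $y\in T(G)$, project $y$ onto its $p$-component using the primary decomposition). Second, note that the paper idiosyncratically calls $\mathbb{Q}/\mathbb{Z}$ itself ``the quasicyclic group''; for the theorem as stated to be true one must read ``quasicyclic groups'' in the usual sense of the Pr\"ufer groups $\mathbb{Z}(p^{\infty})$, exactly as you do.
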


\begin{teo}\label{teo_imersao_num_grupo_divisivel}
Every Abelian group is isomorphic to a subgroup of a divisible group.
\end{teo}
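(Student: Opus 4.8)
The plan is to present $G$ as a quotient of a free Abelian group and then transplant that presentation inside a divisible group, using the elementary fact that quotients of divisible groups are divisible.

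First I would take the free Abelian group $F = \mathbb{Z}^{(G)}$ on the underlying set of $G$, with canonical generators $(e_g)_{g \in G}$, together with the surjective homomorphism $\pi \colon F \to G$ determined by $\pi(e_g) = g$. Writing $K = \ker \pi$, this gives $G \cong F / K$. Now $F = \mathbb{Z}^{(G)}$ is naturally a subgroup of $D := \mathbb{Q}^{(G)}$ via the coordinatewise inclusion $\mathbb{Z} \hookrightarrow \mathbb{Q}$, and $D$ is divisible: it is a direct sum of copies of the divisible group $\mathbb{Q}$, and a direct sum of divisible groups is divisible (alternatively, $D$ is divisible by Theorem \ref{teo_classificacao_grupos_divisiveis}).

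Next I would record the two routine facts that make everything fit together. A quotient of a divisible group is divisible: for $d + K \in D/K$ and $n \in \mathbb{N} \setminus \{0\}$, divisibility of $D$ yields $d' \in D$ with $nd' = d$, whence $n(d' + K) = d + K$. And since $K \subseteq F \subseteq D$, the inclusion $F \hookrightarrow D$ induces a homomorphism $F/K \to D/K$ which is injective, because a coset $f + K$ with $f \in F$ is trivial in $D/K$ exactly when $f \in K$, i.e.\ exactly when $f + K$ is trivial in $F/K$. Composing the isomorphism $G \cong F/K$ with this injection exhibits $G$ as a subgroup of the divisible group $D/K$.

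I do not expect any genuine obstacle here: every step is standard. The only assertions that deserve an explicit word are that every Abelian group is a quotient of a free Abelian group (which is what legitimizes the choice of $F$ and $\pi$) and that the induced map $F/K \to D/K$ is injective, both of which are immediate from $K \subseteq F$.
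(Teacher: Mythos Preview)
Your argument is correct and is one of the standard proofs of this fact. The paper does not actually supply a proof of this theorem; it simply records it as a well-known result and refers the reader to Robinson's textbook \cite{robinson}, so there is no in-paper proof to compare against.
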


\section{The groups for the immersion}

We present some of the notation that will be used throughout this article.

We fix a partition $\{P_0, P_1\}$ of $\mathfrak c$ such that $|P_0|=|P_1|=\mathfrak c$, and $\omega+1\subseteq P_1$.
Let $\{R_0,R_1\}$ be a partition of $2^\mathfrak c \setminus \mathfrak c$ such that $|R_0|=|R_1|=2^\mathfrak c$. Define 
$\mathbb U= \mathbb{Q}^{(R_0)}\oplus\mathbb{Q}^{(R_1)}$ and $\mathbf{U}= (\mathbb{Q}/\mathbb{Z})^{(R_0)}\oplus\mathbb{Q}^{(R_1)}$.

We define $\mathbb W = (\mathbb{Q} /\mathbb{Z})^{( P_0 \times \omega)} \oplus \mathbb{Q}^{ (P_1)} \oplus \mathbf{U}$. We also let $\mathbb X= \mathbb{Q} ^{( P_0\times \omega)} \oplus \mathbb{Q}^{ (P_1)} \oplus \mathbb{U}$.

Throughout this work, the main group will be $\mathbf X = \oplus_{n >1,m \geq 1 } (\mathbb Q/\mathbb Z)^{(C_{n,m} \times m)}\oplus \mathbb Q^{(P_1)}\oplus \mathbf{U}$, where $\{ C_{n,m}:\, n>1, m\geq 1\}$ is a partition of $P_0$ into pieces of cardinality $\mathfrak c$ and $C_0=\bigcup_{m,n>1}C_{n,m}$ is such that $P_0\setminus C_0=\bigcup_{n>1}C_{n,1}$ has cardinality $\mathfrak c$. We will also define  $C_1 \subseteq P_1$ with $|C_1|=\mathfrak c$, $\omega\subseteq C_1$ and $|P_1 \setminus C_1|=\mathfrak c$. 
To simplify the notation of the forcing later we will also
partition $C_1$ as $\{C_{1,m}:\, m>1\}$ and let $C_{1,1}=P_1 \setminus C_1$.

\subsection{Structure of the article}
 We use forcing to construct an injective group homomorphism $\Phi :\, {\mathbf X}\to \mathbb{T}^{\mathfrak c}$. The range of this homomorphism will be countably compact and have convergent sequences. Each forcing condition will be a partial countable piece of this homomorphism, and the existence of suitable conditions was proved in \cite{bellini&boero&castro&rodrigues&tomita}.
 
Of course, not every subgroup of $X$ is countably compact. However, we show that if $H$ is a group such that $2^\mathfrak c \geq |H| = r(H) = \mathfrak{c}$ and for all $d, n \in \mathbb{N}$ with $d \mid n$, the group $dH[n]$ is either finite or has cardinality at least $\mathfrak{c}$, then it is isomorphic to a subgroup of $\mathbf X$ that is countably compact and has convergent sequences considering the subspace topology of $X$ generated by $\Phi$. To show that such a copy exists, we define the concept of \textit{large subgroup of } $X$, which was inspired by the concept of \textit{nice immersion} we defined in \cite{bellini&boero&castro&rodrigues&tomita}. This concludes the classification.

The convergent sequences will be the sequences $(\frac{n!}{S} \chi_n:\, n \in \omega )$ in $G\subseteq {\mathbf X}$ (identifying $G$ with its copy in ${\mathbf X}$), for each positive integer $S$ and they will converge to $0$. 

\subsection{More notation}

Given $w \in \mathbb{W}$ or $w \in \mathbb X$, $x \in (P_0 \times \omega) \cup R_0$ and $y \in  P_1 \cup R_1$, we denote by $w(x)$ the $x$-th coordinate of $w$ and $w(y)$ the $y$-th of $w$, so the functions $w\rightarrow w(x)$ and $w\rightarrow w(y)$ are the natural projections.

We also fix well defined numerators and denominators for fractions: if $r \in  \mathbb Q/\mathbb Z$, then $p(r)$ and $q(r)$ are the unique integers $p, q$ such that $q>0$, $\gcd(p, q)=1$, $0\leq p<q$ and $r=\frac{p}{q}+\mathbb Z$. Likewise, if $r \in \mathbb Q$,  $p(r)$ and $q(r)$ are the unique integers $p, q$ such that $q>0$, $\gcd(p, q)=1$ and $r=\frac{p}{q}$.

Given $w \in \mathbb{W}$ (or $w \in \mathbb X$), we denote by $w^0$ and $w^1$ the unique elements of $\mathbb{W}$ (or $\mathbb X$) such that $\supp w^0 \subseteq ((P_0 \times \omega) \cup R_0)$, $\supp w^1 \subseteq P_1 \cup R_1$ and $w=w^0+w^1$, that is, $w\rightarrow w^0$ is the natural projections into $(\mathbb Q/\mathbb Z)^{((P_0 \times \omega) \cup R_0)}$ (or $\mathbb Q^{((P_0 \times \omega )\cup R_0)}$) and $w \rightarrow w^1$ is the natural projection into $\mathbb Q^{(P_1\cup R_1)}$. Also, we call $w^{1,0}$ and $w^{1,1}$ the natural projections of $w$ onto $\mathbb Q^{(\omega)}$ and $\mathbb Q^{((P_1 \cup R_1)\setminus \omega)}$, respectively.

We also define $p(w) = \max\{|p(w(z))| : z \in \supp w\}$  and $q(w) = \max\{q(w(z)) : z \in \supp w \}$ if $w \neq 0$. We define $p(0)=0$ and $q(0)=0$.

Similarly, given $g:\omega\to \mathbb{W} $ (or $\mathbb X$), we define $g^0$, $g^{1}$, $g^{1, 0}$ and $g^{1, 1}$. So $g=g^0+g^1=g^0+g^{1, 0}+g^{1, 1}$, $\supp g^0\subseteq ((P_0\times \omega) \cup R_0)$, $\supp g^1\subseteq P_1 \cup R_1$, $\supp g^{1, 0}\subseteq \omega$ and $\supp g^{1, 1}\subseteq (P_1\cup R_1)\setminus \omega$; where  $\supp h = \bigcup\{ \supp h(k):\, k\in \omega\}$  for a sequence $h$.

It will be useful to be able to easily transform an element of $\mathbb X$ into an element of $\mathbb{W} $. Thus, given $w \in \mathbb X$, we define $[w]$ as the unique element of $\mathbb W$ such that for every $x \in (P_0\times \omega) \cup (R_0)$, $[w](x)=w(x)+\mathbb Z$ and for every $y \in P_1 \cup R_1$, $[w](y)=w(y)$. Clearly, the function $w\rightarrow [w]$ is a group homomorphism from $\mathbb X$ onto $\mathbb{W}$. Given a function $g:\omega\rightarrow \mathbb X$, we also define $[g]:\omega\rightarrow \mathbb{W}$ be given by $[g](n)=[g(n)]$ for every $n \in \omega$.

\section{Types of sequences}

\subsection{Associating sequences to a type}
We will make some minor adjustments to the definition of the types defined in \cite{bellini&boero&castro&rodrigues&tomita} to be able to work with larger groups. Basically, we define all the types on $\mathbb W$ as in \cite{bellini&boero&castro&rodrigues&tomita}. However, the group $\mathbb W$ used in \cite{bellini&boero&castro&rodrigues&tomita} is not the same group $\mathbb W$ we are using in this article, since there $\mathbb W$ does not have the component $\mathbf U$.

In this section we define the 11 types of sequences related to a a subgroup $G$ of $\mathbb W$ and state the theorem that every sequence is related to one of them.

The name of the sequences which are of one of these 11 types of sequences for $G$ (which will be defined in the following subsections) by $\mathcal H_G$.

The main result we will state in this section is the following, which, in particular, implies that when working with the existence of accumulation points for a sequence, by passing to a subsequence it is enough to guarantee the existence of an accumulation point for the 11 types and the convergence of the sequence of $(\frac{n!}{S} \chi_n:\, n \in \omega)$ to $0$ for each positive integer $S$. The proof is the same as the proof of Theorem 3.1. of \cite{bellini&boero&castro&rodrigues&tomita} and is omitted, although the $\mathbb W$ is different.

\begin{teo}\label{prop_subseq}
Let $f : \omega \to \mathbb W$ is given by $f(n) =n!\chi_n$ for every $n \in \omega$. Let $G$ be a subgroup of $\mathbb W$ containing $(0, \chi_n)$ for every $n \in \omega$. Let $g: \omega \to \mathbb X$ with $[g] \in G^\omega$. 

Then there exists $h:\omega\rightarrow \mathbb X$ such that $h\in \mathcal{H}_G$ or $[h]$ is constant and in $G$, $c\in \mathbb X$ with $[c] \in G$, $F \in [\omega]^{< \omega}$, $p_i, q_i \in \mathbb{Z}$ with $q_i \neq 0$ for every $i \in F$, $(j_{i}:i\in F)$ increasing enumerations of subsets of $\omega$  and $j: \omega \to \omega$ strictly increasing such that

\[g \circ j = h + c + \sum_{i \in F} \frac{p_i}{q_i}   f\circ j_{i}\]

with $q_i\leq j_i(n)$ for each $n \in \omega$ and $i \in F$ (which implies $[\frac{1}{q_i} f\circ j_i] \in G^\omega$ since $q_i|((j_i(n))!)$ for each $i \in F$ and $n \in \omega$).\qed
\end{teo}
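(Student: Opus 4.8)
The plan is to normalize $g$ by passing to a subsequence finitely many times, each time peeling off a constant or a rescaled ``factorial diagonal,'' until what remains is forced to be one of the eleven types.

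Decompose $g=g^0+g^{1,0}+g^{1,1}$ as in the notation above, so that only $g^{1,0}$ (supported on $\omega$) can interact with the summands $\frac{p_i}{q_i}f\circ j_i$. Using the $\Delta$-system lemma for countable families of finite sets together with finitely many applications of the pigeonhole principle, first choose a strictly increasing $j_0:\omega\to\omega$ so that, writing $\tilde g=g\circ j_0$: the family $(\supp\tilde g(n))_{n\in\omega}$ is a $\Delta$-system with root $\rho$ and $\min(\supp\tilde g(n)\setminus\rho)\to\infty$; each coordinate of $\rho$, and each ``moving'' coordinate at which the value of $\tilde g(n)$ has numerator or denominator unbounded in $n$, lies in a single one of the blocks $(P_0\times\omega)\cup R_0$, $\omega$, $(P_1\cup R_1)\setminus\omega$; and along $\rho$, and along those moving coordinates, the integer sequences $n\mapsto p(\tilde g(n)(z))$ and $n\mapsto q(\tilde g(n)(z))$ are each eventually constant or strictly increasing, while for each $z\in\rho$ it is decided whether $[\tilde g(n)](z)$ is eventually constant. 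This is possible because each $\tilde g(n)$ has finite support and at every stage $\omega$ is split into finitely many classes, one of which is kept infinite.

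Now peel off the constant. Fix $n_0$ past which all the ``eventually constant'' behaviours above have set in and take $c=\tilde g(n_0)$; then $[c]\in G$ for free, being a term of the sequence $[\tilde g]\in G^\omega$. Discarding the indices below $n_0$, we may assume that $\tilde g(n)-c$ has zero $z$-th coordinate (mod $\mathbb Z$ when relevant) at every $z\in\rho$ on which $[\tilde g(\cdot)](z)$ was eventually constant, while the other coordinates still behave homogeneously. Next, for each moving coordinate $z_\ell$ with $z_\ell(n)\in\omega$ for all $n$ whose value $\tilde g(n)(z_\ell(n))$ is ``commensurable with $z_\ell(n)!$'' in the precise sense required by the type definitions (so that subtracting its leading factorial part lands it in the admissible range), pass to a further subsequence on which the leading coefficient stabilizes to $\frac{p_i}{q_i}$, set $j_i=(z_\ell(n))_n$, thin once more so that $z_\ell(n)\ge q_i$ holds for all $n$ — possible since $z_\ell(n)\to\infty$ — and record the summand $\frac{p_i}{q_i}f\circ j_i$; then $q_i\le j_i(n)$, so $q_i\mid (j_i(n))!$ and, since $G$ contains every $\chi_m$, $[\tfrac1{q_i}f\circ j_i]\in G^\omega$. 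Only finitely many such $i$ occur, and we let $F$ index them. Writing $j$ for the composition of all the index functions used, set $h=g\circ j-c-\sum_{i\in F}\frac{p_i}{q_i}f\circ j_i$.

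It remains to recognise $h$. By construction $[h]\in G^\omega$, and the homogeneity secured above means that the support pattern of $h$ — which coordinates are in the root, which move off to infinity, which block each lives in — together with the growth type of numerators and denominators along those coordinates is of a single fixed shape; the substance of the proof is checking that each such shape is exactly one of the eleven types, or else makes $[h]$ constant (hence constant in $G$). This is done exactly as in the proof of Theorem~3.1 of \cite{bellini&boero&castro&rodrigues&tomita}: the only new feature of the present $\mathbb W$ is the summand $\mathbf U=(\mathbb Q/\mathbb Z)^{(R_0)}\oplus\mathbb Q^{(R_1)}$, whose coordinate blocks are of the same two kinds — $\mathbb Q/\mathbb Z$ and $\mathbb Q$ — already present in $(\mathbb Q/\mathbb Z)^{(P_0\times\omega)}\oplus\mathbb Q^{(P_1)}$, so every case of the case analysis and every defining clause of the types transfers to it verbatim. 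I expect the main obstacle to be exactly this exhaustiveness: verifying that the eleven types together with the constant case cover every configuration that survives the normalization, that only finitely many factorial-diagonal corrections are ever needed, and that those corrections are enough to bring the residual moving coordinates into the range the types require.
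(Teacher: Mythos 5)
Your proposal follows essentially the same route as the paper, which in fact omits this proof entirely and defers to Theorem~3.1 of \cite{bellini&boero&castro&rodrigues&tomita}: the repeated thinning via $\Delta$-systems and pigeonhole, the peeling off of a constant and of finitely many rescaled factorial diagonals $\frac{p_i}{q_i}f\circ j_i$ supported on $\omega$, and the closing case analysis recognising the residue as one of the eleven types or a constant is exactly that argument, and your observation that the new summand $\mathbf U$ only contributes coordinate blocks of the two kinds already present (so the classification transfers verbatim) is precisely the remark the paper relies on. The one step to tighten is the choice $c=\tilde g(n_0)$: taken literally it drags the non-root support of $\tilde g(n_0)$ into $h$ as a permanent constant block, so $c$ should instead be the stabilised part of the sequence, arranged so that $[c]\in G$ (for instance built from differences of terms of $[\tilde g]$ together with the recorded factorial corrections, whose classes lie in $G$ because $G$ contains every $(0,\chi_n)$), as in the cited proof.
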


\subsection{ The types} Each type uses some point of the support and the supports are divided in three groups.

\begin{defin}[The types related to $(R_1 \cup P_1)\setminus \omega$] Let $G$ be a subgroup of $\mathbb W$. We define the first three types of sequence (with respect to $G$) as follows:\label{def_tipos_P1}
Let $g : \omega \to \mathbb X$ be such that $[g(n)]\in G$, for every $n \in \omega$. \\

We say that $g$ is \emph{of type 1} if $\supp g^{1,1}(n) \setminus \cup_{m < n} \supp g^{1,1}(m) \neq \emptyset$, for every $n \in \omega$.\\

We say that $g$ is \emph{of type 2} if $q(g^{1,1}(n)) > n$, for every $n \in \omega$.\\

We say that $g$ is \emph{of type 3} if $\{q(g^{1,1}(n)) : n \in \omega\}$ is bounded and $|p(g^{1,1}(n))| > n$, for every $n \in \omega$.
\end{defin}

\begin{defin}[The types related to $\omega$]
Let $G\subseteq \mathbb W$ be a subgroup such that $\{[\chi_n]:\, n \in \omega\} \subseteq G$. Let $g : \omega \to \mathbb X$ be such that $[g(n)] \in G$, for every $n \in \omega$. Then we define types 4 to 9 (with respect to $G$) as follows: \\

We say that $g$ is \emph{of type 4} if $q(g(n)) > n$, for every $n \in \omega$.\\

We say that $g$ is \emph{of type 5} if there exists $M \in \bigcap_{n \in \omega} \supp g^{1,0}(n)$ such that $\{q(g(n)) : n \in \omega\}$ is bounded and $|p(g(n)(M))| > n$, for every $n \in \omega$.\\

To define types 6, 7 and $8$, suppose $g$ is  such that for each $n \in \omega$, there exists $M_n \in \supp g^{1,0}(n) \setminus \cup_{m < n} \supp g^{1,0}(m)$ such that \[\left( \frac{g(n)(M_n)}{M_n !} : n \in \omega \right)\]

\noindent
is a 1-1 sequence that converges to some $ u \in  (\mathbb R\setminus \mathbb Q)\cup \{-\infty, 0, \infty\}.$
\\

We say that $g$ is of \emph{type 6} if $u=0$.\\

We say that $g$ is of \emph{type 7} if $u \in \mathbb R\setminus \mathbb Q$.\\

We say that $g$ is of \emph{type 8} if $u$ is $\infty$ or $-\infty$. \\

We say that $g$ is \emph{of type 9} if $\left\{ \dfrac{g(n)(M)}{M!} : M \in \supp g^{1,0}(n), n \in \omega \right\}$ is finite and $|\supp g^{1,0}(n)| > n$ for every $n \in \omega$.
\end{defin}

\begin{defin}[The types related to $\vec{P_0} \cup R_0$]
We define types 10 and 11 (with respect to $G$) as follows:
Let $g : \omega \to \mathbb X$ be such that $\supp g(n) \subseteq R_0 \cup (P_0 \times \omega)$ and $[g(n)]\in G$ for each $n\in \omega$ \\

We say that $g$ is \emph{of type 10} if $q(g^0(n)) > n$, for every $n \in \omega$.\\

We say that $g$ is \emph{of type 11} if  the family $\{[g(n)]: n \in \omega\}$ is an independent family whose elements have a fixed order $k$, for some positive integer $k$.
\end{defin}

Notice that if $G$ is a subgroup of $\mathbf X$ then  $\mathcal H_G=G^\omega \cap \mathcal H_{\mathbf X}$. We also set $\mathcal H=\mathcal H_{\mathbf X}$.

The following lemma is easy to verify and left to the reader.

\begin{lem}
Being a sequence of one of the types is absolute for transitive models of ZFC.
\end{lem}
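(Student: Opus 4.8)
The statement asserts that the property ``$g$ is a sequence of type $k$'' (for each fixed $k \in \{1, \dots, 11\}$) is absolute between transitive models of ZFC. The plan is to inspect each of the eleven definitions and verify that each clause is expressed by a formula whose quantifiers all range over hereditarily countable objects, or that is otherwise manifestly absolute. The key observation is that every type is a property of a single sequence $g : \omega \to \mathbb X$, and such a $g$ — together with all the auxiliary countable data appearing in the definitions (the finite sets $\supp g(n)$, the integers $p(g(n)(z))$, $q(g(n)(z))$, the chosen coordinates $M$, $M_n$, the limit $u$) — is a hereditarily countable object, so that quantification over it and over its pieces is $\Delta_0$ (or at worst $\Sigma_1 \cap \Pi_1$) over the relevant structures. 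Since ZFC proves enough basic facts about $\omega$, $\mathbb Z$, $\mathbb Q$, $\mathbb Q/\mathbb Z$, arithmetic operations, and finite supports, and these are all computed the same way in any transitive model, the conclusion follows clause by clause.

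Carrying this out: First I would note that $\mathbb X$ and $\mathbb W$, being direct sums indexed by sets of size at most $2^{\mathfrak c}$, are \emph{not} absolute as whole objects — but the point is that membership ``$g \in \mathcal H_G$'' only ever refers to finitely many coordinates of finitely many $g(n)$, and the value of a coordinate, an order $\ordem(x)$, a numerator $p(r)$ or denominator $q(r)$, and a support $\supp g(n)$ are all computed by absolute recursions. Then I would handle types 1--3 (clauses about $\supp g^{1,1}(n) \setminus \bigcup_{m<n}\supp g^{1,1}(m)$, about $q(g^{1,1}(n)) > n$, about boundedness of a countable set of integers together with $|p(g^{1,1}(n))| > n$): each is a statement with all quantifiers over $\omega$ and over finite sets of integers, hence $\Delta_0$ over $(H_{\omega_1}, \in)$ and absolute. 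Types 4--9 are similar but involve: the existence of $M \in \bigcap_n \supp g^{1,0}(n)$ (quantifier over $\omega$), the existence of the coordinates $M_n$ (a choice function $\omega \to \omega$, a hereditarily countable object), and the convergence of an explicitly given real sequence $\left(\frac{g(n)(M_n)}{M_n!}\right)_n$ to an irrational, to $0$, or to $\pm\infty$. Convergence of a concrete sequence of rationals to a real (and irrationality of that limit, and being $1$-$1$) is an arithmetic statement about the sequence, hence absolute; here I would invoke the standard fact that $\Sigma^1_1$ and $\Pi^1_1$ statements are absolute for transitive models of ZFC (Mostowski absoluteness / Shoenfield), or more simply observe that ``$\lim = u$'' for a definable $u$ unwinds to a $\Pi^0_2$ condition. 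Finally types 10--11: type 10 is again a $q(g^0(n)) > n$ clause; type 11 asserts that $\{[g(n)] : n \in \omega\}$ is independent with all elements of a fixed finite order $k$, which unwinds to: $\ordem([g(n)]) = k$ for all $n$, and for every finite set of indices and every tuple of integers, a linear relation $\sum m_i [g(n_i)] = 0$ forces each $m_i [g(n_i)] = 0$ — all quantifiers over $\omega$ and over finite integer tuples, hence absolute. Assembling these, every type is absolute, which is the claim.

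The only point requiring slight care — and the one I would flag as the main (mild) obstacle — is the clause in types 6--8 involving convergence to a limit in $(\mathbb R \setminus \mathbb Q) \cup \{-\infty, 0, \infty\}$, since a real number is not a hereditarily countable object in the naive sense and one must be sure the limit statement does not secretly quantify over all reals. This is resolved by observing that the condition ``the sequence of rationals $\left(\frac{g(n)(M_n)}{M_n!}\right)_n$ converges and its limit is irrational'' is equivalent to an arithmetical ($\Pi^0_3$ at worst) property of the sequence of rationals alone: it is Cauchy, and for every pair of integers $(p,q)$ with $q \neq 0$ the sequence is eventually bounded away from $p/q$. Likewise ``converges to $0$'' is $\Pi^0_2$ and ``diverges to $\pm\infty$'' is $\Pi^0_2$, and ``$1$-$1$'' is $\Pi^0_1$. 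All of these are absolute for transitive models of ZFC by Mostowski absoluteness for arithmetical statements. With this observation in place the verification for all eleven types is routine, and I would simply remark that checking each clause is ``easy and left to the reader'' exactly as the paper does, having indicated that the uniform reason is that each type is a property of the hereditarily countable object $g$ (plus absolutely-computable auxiliary data) expressible by a formula with quantifiers ranging only over $\omega$, finite sets, and integers.
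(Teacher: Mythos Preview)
Your proposal is correct and does exactly what the paper intends when it writes that the lemma ``is easy to verify and left to the reader'': a clause-by-clause check that each type is defined by a formula whose quantifiers range only over $\omega$, finite sets, integers, and the (fixed-parameter) coordinates of $g$, together with the key observation that convergence to an irrational in types 6--8 is an arithmetical condition on the sequence of rationals $\bigl(g(n)(M_n)/M_n!\bigr)_n$. The only cosmetic point is that $g$ itself is not literally hereditarily countable (its supports live in $2^{\mathfrak c}$), but you already acknowledge this and your argument goes through with the index sets $P_0, P_1, R_0, R_1, C_{n,m}$ taken as fixed parameters, which is all that is needed for the intended application between the ground model and the $\omega_1$-closed forcing extension.
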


 \section{The Partial order and the topology in the forcing extension}
In this section we define the forcing poset and prove its basic properties.
\subsection{Countable homomorphisms}
In this subsection, we will state a theorem that guarantees (in ZFC) the existence of partial homomorphisms defined on countable subgroups of $\mathbf{X}$. The statement of this theorem is very similar to Proposition 4.3. of \cite{bellini&boero&castro&rodrigues&tomita} and has a completely analogous proof, therefore we omit it. The only difference is that in \cite{bellini&boero&castro&rodrigues&tomita} the countable subset of $\mathcal H$ was not indexed and that proposition was originally stated for 1-1 indexations, but the proof is the same if one allows any sequence. 

\begin{prop} \label{countable.hom}
Let $E$ be a countable subset of $2^\mathfrak c$ containing $\omega$, $e \in \mathbf{X}_E$ with $e\neq 0$, a countable $\{g_k:\, k \in \omega\} \subseteq \mathcal H_{\mathbf{X}_E}$ and $A_k$ infinite subsets of $\omega$ for each $k \in \omega$.

Fix a family $(c_k: k \in \omega)$ of elements of $\mathbf {X}_E$ such that $[c_k] \in\mathbf X_E$,  $c_k$ is a non torsion element if $g_k$ is of one of types from 1 to 10, and $[c_k]$ has the same order as $[g_k]$ if $g_k$ is of type 11.

Then there exists a homomorphism $\rho:\, \mathbf{X}_{E} \to \mathbb T$ such that:

\begin{enumerate}
\item $\rho (e) \neq 0$,
\item for each $k \in \omega$, there exists $B_k\subseteq A_k$ infinite such that  $(\rho([g_k(n)]))_{n \in B_k}$ converges to $\rho([c_k])$, and
\item $\left(\rho\left(\frac{n!}{S}\chi_n\right): n \in \omega\right)$ converges to $0 \in\mathbb T$, for every integer $S>0$.
\end{enumerate}
\end{prop}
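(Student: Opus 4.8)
The plan is to build the homomorphism $\rho$ in two stages: first define it on a carefully chosen subgroup where we can arrange all the convergence requirements by a direct construction, and then extend it to all of $\mathbf{X}_E$ using divisibility of $\mathbb{T}$ (the injective-hull extension proposition quoted earlier). The key point is that we have a great deal of freedom: $E$ is countable, so $\mathbf{X}_E$ has cardinality at most $\mathfrak{c}$, and $\mathbb{T}$ is divisible, so any partial homomorphism extends. Thus the whole difficulty is concentrated in making clauses (1)--(3) hold simultaneously on a suitable generating set, and in ensuring consistency of the finitely/countably many demands.

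First I would handle clause (3) and clause (1) essentially by fiat on the relevant coordinates. For (3), we must send $\frac{n!}{S}\chi_n$ close to $0$ for every $S$; since the $\chi_n$ (for $n\in\omega\subseteq E$) can be taken to generate a free direct summand $\mathbb{Q}^{(\omega)}$ (after suitable bookkeeping inside $\mathbf{X}_E$), we may prescribe $\rho(\chi_n)$ to be a rational point $\frac{a_n}{n!}+\mathbb{Z}$ with $|a_n|$ small relative to $n!/S$ for all $S\le n$, which forces $\rho(\frac{n!}{S}\chi_n)\to 0$; this is exactly the standard trick from \cite{boero&garcia-ferreira&tomita}. For (1), since $e\ne 0$ we pick a coordinate $z\in\supp e$ and arrange $\rho$ to be nonzero there, which we can do independently of the $\chi_n$ choices provided $z$ is handled on a complementary summand, or we perturb within the freedom left. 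The truly substantive step is clause (2): for each $k$ we must thin $A_k$ to an infinite $B_k$ so that $\rho([g_k(n)])\to\rho([c_k])$ along $B_k$. This is where the 11-types classification does all the work — the definition of $\mathcal{H}_{\mathbf{X}_E}$ is precisely engineered so that for a sequence of each type there is enough ``fresh'' support or enough growth of numerators/denominators that one can, coordinate by coordinate, choose the values of $\rho$ on the new support points at stage $k$ to drag $\rho([g_k(n)])$ into a shrinking sequence of arcs around the target $\rho([c_k])$, while only affecting $g_{k'}$ for $k'>k$ in controlled ways (handled by the diagonalization). The matching of the torsion/non-torsion condition on $c_k$ with the type of $g_k$ is exactly what makes each individual convergence achievable: for types 1--10 we need $c_k$ non-torsion so that $\rho([c_k])$ can be an arbitrary (in particular irrational-angle) point, and for type 11 we need $[c_k]$ of the same order $k$ as $[g_k]$ so the limit lives in the correct torsion subgroup $\mathbb{T}[k]$.

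The construction proper is a recursion on $k\in\omega$. I would maintain an increasing chain of countable subgroups $H_0\subseteq H_1\subseteq\cdots$ of $\mathbf{X}_E$ with $\bigcup_k H_k$ generating $\mathbf{X}_E$ (modulo the divisible extension at the end), partial homomorphisms $\rho_k:H_k\to\mathbb{T}$ with $\rho_{k+1}\upharpoonright H_k=\rho_k$, and infinite sets $A_k\supseteq B_k^{(0)}\supseteq B_k^{(1)}\supseteq\cdots$, so that after step $k$ the value $\rho_k([g_k(n)])$ lies in an arc of $\delta$-diameter $<2^{-k}$ around $\rho_k([c_k])$ for all $n$ in the current approximation $B_k^{(k)}$ to $B_k$; then $B_k=\bigcap_j B_k^{(j)}$ works by a standard diagonal (pseudointersection) argument since at each later step we only shrink it along an infinite subset. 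At step $k$ the new support one needs to commit comes from the defining clause of $g_k$'s type (the ``$\setminus\bigcup_{m<n}\supp\cdots$'' fresh points for types 1, 6--9, or new prime-power denominators for types 2, 4, 10, or the unbounded numerators for types 3, 5, or the independence for type 11); on those fresh coordinates $\rho_{k+1}$ is as yet unassigned, so we are free to set it, and because $\mathbb{T}$ is divisible the assignment extends consistently to $H_{k+1}$.

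The main obstacle — and the step I would spend the most care on — is verifying that the finitely many (at each stage) coordinate choices needed to push $\rho([g_k(n)])$ toward $\rho([c_k])$ are mutually consistent with the already-fixed values $\rho_k$ on $H_k$, i.e. that the linear system ``$\rho(\text{new part of }g_k(n))\equiv \rho([c_k])-\rho(\text{old part of }g_k(n))\pmod{\mathbb{Z}}$, within tolerance $2^{-k}$'' always has a solution. This reduces, type by type, to an elementary lemma: given a rational point target and a coefficient that is nonzero modulo the relevant modulus (guaranteed by the type's growth/freshness condition and by the order-matching hypothesis on $c_k$), one can solve for the value on the fresh generator within any prescribed arc. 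For the nine non-torsion types this is the observation that $t\mapsto rt$ has dense image in $\mathbb{T}$ for the relevant rational $r$ with large denominator (or that we can simply choose the fresh generator's image freely since it is genuinely new), and for type 11 it is the finite-group statement that multiplication by a unit is a bijection of $\mathbb{Z}/k\mathbb{Z}$. Once these per-type lemmas are in hand the recursion goes through mechanically, and setting $\rho=\bigcup_k\rho_k$ followed by a divisible-group extension to all of $\mathbf{X}_E$ finishes the proof. Since this is almost verbatim Proposition 4.3 of \cite{bellini&boero&castro&rodrigues&tomita} with the harmless enlargement of $\mathbb{W}$ to include the summand $\mathbf{U}$ (on which no convergence demand is placed, so $\rho$ may be defined arbitrarily there, e.g.\ as $0$), I would in the final writeup state the per-type verification and otherwise refer to that paper.
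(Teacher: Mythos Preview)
Your proposal is correct and matches the paper's approach: the paper itself omits the proof, stating that it is completely analogous to Proposition~4.3 of \cite{bellini&boero&castro&rodrigues&tomita} (with the only change being the harmless enlargement by $\mathbf{U}$ and allowing non-injective indexing), and your sketch is precisely an outline of that construction together with the same citation. The per-type recursive diagonalization you describe, followed by a divisible extension to all of $\mathbf{X}_E$, is exactly the method of the cited proposition.
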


Now we are ready to define the forcing poset we are going to use.

\begin{defin} \label{the.order}
We define $\mathcal P$ as the set of the tuples of the form  $(E, \alpha, \phi, \mathcal G, c, A)$ such that:

\begin{itemize}
    \item $E$ is a countable subset of $2^\mathfrak c$ containing $\omega$,
    \item $\alpha < \mathfrak c$,
    \item $\mathcal G=(\mathcal G_{n, m}: \{(n, m): n\geq 1, m>1\})$ is such that each $\mathcal G_{n, m}$ is a countable subset of $\mathcal H$, where the types are defined with respect to $\mathbf{X}_E$. If $n=1$, the elements of $\mathcal G_{n, m}$ are sequences of types 1-10. If not, they are all of type $11$ and order $n$.
    \item $A=(A_{n,m,g}: n \geq 1, m>1, g \in \mathcal G_{n, m})$ is such that each $A_{n, m, g}$ is an infinite subset of $\omega$,
    \item $c=(c_{n,m,g}: n \geq 1, m>1, g \in \mathcal G_{n, m}$) is a family of elements of $\mathbf {X}_E$,
    \item if $n, m\geq 2$ and $g \in \mathcal G_{n, m}$, $c_{n,m,g} $ is an element of order $n$ with $c_{n,m,g}=[\frac{1}{n}\chi_{(\mu,0)}]$ for some $\mu \in C_{n,m} $,
    \item if $m\geq 2$ and $g \in \mathcal G_{1, m}$, $c_{1,m,g} =[ \chi_{(\mu)}] $ for some $\mu \in C_{1,m}$, 
    \item $\phi :\, \mathbf{X}_E \to \mathbb T^\alpha$ is an homomorphism,
    \item $(\phi([g(k)]))_{k \in A_{n,m,g}}$ converges to $\phi(c_{n,m,g})$ for each $ n\geq 1, m > 1$,
    \item $(\phi([\frac{1}{N}\chi_n)])_{n\in \omega}$ converges to $0 \in \mathbf X$, for every natural $N\geq 1$.
    \end{itemize}
    
We define $(E, \alpha, \phi, \mathcal G, c, A)\leq  (E', \alpha', \phi', \mathcal G', c', A')$
if:

\begin{enumerate}
    \item $E\supseteq E'$
    \item $\alpha \geq \alpha'$
    \item $\mathcal G_{n,m} \supseteq \mathcal G'_{n,m}$ for every $n\geq 1$ and $ m>1$
    \item $c_{n,m,g}=c'_{n,m,g}$ for each $n\geq 1$, $m>1$ and $g \in \mathcal G'_{n,m}$
    \item $A_{n,m,g} \subseteq ^* A'_{n,m,g}$ for each $n\geq 1$, $m>1$ and $g \in \mathcal G'_{n,m}$
    \item For every $\xi<\alpha'$ and $a \in X_{E'}$, $\phi(a)(\xi)= \phi'(a)(\xi)$.
\end{enumerate}
 Given $p \in \mathcal P$, we may denote its components by $E_p$, $\alpha^p$, $\phi^p$, $\mathcal G^p$, $c^p$ and $A^p$.

 If $G$ is a generic filter over $\mathcal P$ then the generic homomorphism defined by $G$ is the mapping $\Phi$ of domain $\bigcup\{\dom \phi^p: p \in G\}$ into $\mathbb T^\mathfrak c$ defined by $\phi(\cdot)(\xi)=\bigcup\{\phi^p(\cdot)(\alpha):p \in G\}$. In other words, if $p \in G$, $a \in \mathbf X_{E_p}$ and $\xi<\alpha_p$, then $\Phi(a)(\xi)=\phi^p(a)(\xi)$.
\end{defin}
 
Of course, we must see that the generic homomorphisms are really well defined homomorphisms into $\mathbb T^{\mathfrak c}$. We will see later that by assuming CH in the ground model, $\mathcal P$ is $\omega_1$ closed and has the $\omega_2$-c.c., therefore it preserves cardinals and $\mathfrak c$. We reserve the rest of this section to prove this fact.

 \begin{prop} Let $e \in \mathbf X$ be a non-zero element. Then 
 $\mathcal C_e=\{ p\in \mathcal P:\, e \in \mathbf{X}_{E^p}, \phi^p(e)\neq 0\}$ is open and dense in $\mathcal P$.\label{denseinjective}
 \end{prop}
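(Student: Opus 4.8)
The plan is to prove the two assertions of Proposition \ref{denseinjective} separately: openness of $\mathcal C_e$ (which is nearly immediate from the definition of the order) and density (which is where the real work lies).

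\medskip

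\textbf{Openness.} Suppose $p = (E_p, \alpha^p, \phi^p, \mathcal G^p, c^p, A^p) \in \mathcal C_e$ and $q \le p$. By clause (1) of the order we have $E_q \supseteq E_p$, so $e \in \mathbf X_{E_p} \subseteq \mathbf X_{E_q}$; and since $e \in \mathbf X_{E_p}$, there is some $\xi < \alpha^p$ with $\phi^p(e)(\xi) \ne 0$ (because $\phi^p(e)\neq 0$ as an element of $\mathbb T^{\alpha^p}$). By clause (6), $\phi^q(e)(\xi) = \phi^p(e)(\xi) \ne 0$, hence $\phi^q(e) \ne 0$ and $q \in \mathcal C_e$. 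So $\mathcal C_e$ is open.

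\medskip

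\textbf{Density.} Fix an arbitrary $p \in \mathcal P$; I must find $q \le p$ with $q \in \mathcal C_e$. First, using clause (1) I may assume $e \in \mathbf X_{E_p}$, since I can always extend $E_p$ to a countable $E' \supseteq E_p \cup \supp e$ and correspondingly extend $\phi^p$ to a homomorphism on $\mathbf X_{E'}$ into $\mathbb T^{\alpha^p}$ (using the divisibility/injectivity extension, Proposition on extending homomorphisms into divisible groups, together with the fact that $\mathbb T$ is divisible) while keeping all other data unchanged; this gives an intermediate condition $\le p$ in $\mathcal P$ with $e$ in its domain. So assume $e \in \mathbf X_{E_p}$. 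If $\phi^p(e) \neq 0$ we are done, so assume $\phi^p(e) = 0$. Now I want to extend $\phi^p$ by one more coordinate: set $\alpha^q = \alpha^p + 1$, keep $E_q = E_p$, $\mathcal G^q = \mathcal G^p$, $c^q = c^p$, $A^q = A^p$, and define $\phi^q : \mathbf X_{E_p} \to \mathbb T^{\alpha^p+1}$ so that $\phi^q(a)(\xi) = \phi^p(a)(\xi)$ for $\xi < \alpha^p$ and $\phi^q(a)(\alpha^p) = \rho(a)$, where $\rho : \mathbf X_{E_p} \to \mathbb T$ is a homomorphism to be chosen. For $q$ to lie in $\mathcal P$ and satisfy $q \in \mathcal C_e$, the new homomorphism $\rho$ must satisfy: $\rho(e) \neq 0$; for each $(n,m)$ with $n \ge 1, m > 1$ and each $g \in \mathcal G^p_{n,m}$, there is an infinite $B \subseteq A^p_{n,m,g}$ with $(\rho([g(k)]))_{k \in B}$ converging to $\rho(c^p_{n,m,g})$; and $(\rho(\frac{n!}{S}\chi_n))_{n \in \omega} \to 0$ for every integer $S > 0$. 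This is exactly what Proposition \ref{countable.hom} delivers, once I check its hypotheses: $E_p$ is a countable subset of $2^\mathfrak c$ containing $\omega$; $e \ne 0$; $\{g_k : k \in \omega\}$ is a countable enumeration of $\bigcup_{n,m} \mathcal G^p_{n,m} \subseteq \mathcal H_{\mathbf X_{E_p}}$; the $A_k$ are the corresponding $A^p_{n,m,g}$; and the family $(c_k)$ is $(c^p_{n,m,g})$, which by the constraints in Definition \ref{the.order} consists of non-torsion elements when $g$ is of types 1--10 (the case $n=1$) and of elements $[\frac{1}{n}\chi_{(\mu,0)}]$ of order $n$ when $g$ is of type 11 of order $n$ — matching precisely the hypothesis of Proposition \ref{countable.hom} on the $c_k$. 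Applying that proposition yields the desired $\rho$, and then $q = (E_p, \alpha^p+1, \phi^q, \mathcal G^p, c^p, A^p)$; one checks directly against the list of clauses in Definition \ref{the.order} that $q \in \mathcal P$ (the convergence requirements on $\phi^q$ at coordinates $< \alpha^p$ hold because $\phi^p$ was a condition, and at coordinate $\alpha^p$ because of properties (2) and (3) of $\rho$), and against clauses (1)--(6) that $q \le p$, with the $B \subseteq^* A^p_{n,m,g}$ from item (2) witnessing clause (5). Finally $\rho(e) \ne 0$ gives $\phi^q(e)(\alpha^p) \ne 0$, so $q \in \mathcal C_e$.

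\medskip

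The main obstacle is purely bookkeeping: verifying that the condition $q$ built from the output of Proposition \ref{countable.hom} genuinely satisfies every clause of Definition \ref{the.order} — in particular that the convergence clauses transfer coordinatewise and that the requirement $c_{n,m,g} = [\frac{1}{n}\chi_{(\mu,0)}]$ (resp.\ $[\chi_{(\mu)}]$) is preserved since $c^q = c^p$. There is no combinatorial difficulty here beyond matching the hypotheses of Proposition \ref{countable.hom}; the one point requiring a moment's care is the initial reduction to the case $e \in \mathbf X_{E_p}$, where extending $\phi^p$ over the larger domain $\mathbf X_{E'}$ must be done without disturbing the already-imposed convergence conditions — but since those conditions only constrain $\phi^p$ on $\mathbf X_{E_p}$ and on the sequences $\frac{n!}{S}\chi_n$ (all of whose supports lie in $E_p \supseteq \omega$), any homomorphic extension to $\mathbf X_{E'}$ works.
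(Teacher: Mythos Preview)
Your proof follows the same approach as the paper's: extend the domain to contain $\supp e$ using divisibility of $\mathbb T$, then add one new coordinate via Proposition~\ref{countable.hom}. There is, however, one bookkeeping slip. You declare $A^q = A^p$, but the membership clause for $q \in \mathcal P$ requires the full sequence $(\phi^q([g(k)]))_{k \in A^q_{n,m,g}}$ to converge to $\phi^q(c^q_{n,m,g})$; at the new coordinate $\alpha^p$, Proposition~\ref{countable.hom} only gives convergence along the infinite subset $B_{n,m,g} \subseteq A^p_{n,m,g}$, not along all of $A^p_{n,m,g}$. The paper (and implicitly your own later remark invoking the $B$'s for clause~(5)) resolves this by setting $A^q_{n,m,g} = B_{n,m,g}$ rather than $A^p_{n,m,g}$. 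With that correction the argument is complete and matches the paper's.
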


 \begin{proof} Openness is clear. Fix $p \in \mathcal P$. We will define an extension $q \leq p$ that is an element of $\mathcal C_e$.
 
 Let $E^q = E^p \cup \supp e$ and $\alpha^q=\alpha^p+1$. Extend $\phi^p:\, \mathbf{X}_{E^p} \to \mathbb T^{\alpha_p}$ to
 a homomorphism $\phi:\, \mathbf{X}_{E_q} \to \mathbb T^{\alpha_p}$ using divisibility. Apply Proposition \ref{countable.hom} with  $\{ (n,m,g):\, g\in \mathcal G_{p,n,m}\}$, $\{ A_{n,m,g}^p:\, g\in \mathcal G^p_{n,m}\}$ and $\{ c_{n,m,g}^p:\, g\in \mathcal G_{n,m}^p\}$. Then there exists $\rho:\, \mathbf{X}_{E^q} \to \mathbb T$ such that 
 
 \begin{enumerate}
 \item $\rho(e) \neq 0$,
 
\item for each $(n,m,g)$ with $n\geq 1$, $m>1$ and $g \in \mathcal G_{n,m}^p$,
 there exists $B_{n,m,g} \subseteq A_{n,m,g}^p$ infinite such 
 $(\rho([g(k)]))_{k\in B_{n,m,g}}$ converges to $\rho([c_{n,m,g}^p])$ and
 
 \item $(\rho([\frac{n!}{S}\chi_n]):\, n \in \omega)$ converges to $0\in \mathbb T$, for every positive integer $S$.
 \end{enumerate}
 
 Set $\mathcal G_{n,m}^q =\mathcal G_{n,m}^p $ for each $n \geq 1$ and $m>1$. Set $c_{n,m,g}^q=c_{n,m,g}^p$, $A_{n,m,g}^q=B_{n,m,g}$ for each $g \in \mathcal G_{n,m}^p$ with $n \geq 1$, $m>1$ and $\phi^q= {\phi^p}^\frown \rho$. 
 
 Then $q\leq p$ and $q \in \mathcal C_e$.
 \end{proof}
 
 \begin{prop} For each $\alpha < \mathfrak c$ the set
 $\mathcal A_\alpha =\{ p \in \mathcal P:\, \alpha^p > \alpha\}$ is an open dense subset of $\mathcal P$. \label{densecodomain}
 \end{prop}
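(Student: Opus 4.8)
The plan is to prove that $\mathcal{A}_\alpha$ is open and dense by a direct density argument: given an arbitrary condition $p \in \mathcal{P}$ whose height $\alpha^p$ may be $\leq \alpha$, I will construct an extension $q \leq p$ with $\alpha^q > \alpha$. Openness is immediate from the definition of $\leq$: if $q \leq p$ and $\alpha^p > \alpha$, then $\alpha^q \geq \alpha^p > \alpha$, so $\mathcal{A}_\alpha$ is upward-closed in the forcing order (i.e. open).

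For density, fix $p = (E^p, \alpha^p, \phi^p, \mathcal{G}^p, c^p, A^p)$; we may assume $\alpha^p \leq \alpha$, since otherwise $p$ already lies in $\mathcal{A}_\alpha$. I would keep $E^q = E^p$, $\mathcal{G}^q = \mathcal{G}^p$, $c^q = c^p$, and set $\alpha^q = \alpha + 1$. The work is to extend the homomorphism $\phi^p : \mathbf{X}_{E^p} \to \mathbb{T}^{\alpha^p}$ to a homomorphism $\phi^q : \mathbf{X}_{E^p} \to \mathbb{T}^{\alpha+1}$ still satisfying the two convergence requirements in Definition \ref{the.order}, simultaneously shrinking the sets $A^p_{n,m,g}$ to infinite subsets $A^q_{n,m,g} \subseteq^* A^p_{n,m,g}$ as needed. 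This is done coordinate by coordinate via Proposition \ref{countable.hom}: for each new coordinate index $\xi$ with $\alpha^p \leq \xi < \alpha + 1$, apply Proposition \ref{countable.hom} to the countable data $(n,m,g)$, the current versions of the sets $A_{n,m,g}$, and the elements $c^p_{n,m,g}$ (these have the correct torsion/order properties demanded by that proposition because they do in $p$), picking any nonzero $e$ for condition (1) of \ref{countable.hom} since we do not care about injectivity here; this yields a single homomorphism $\mathbf{X}_{E^p} \to \mathbb{T}$ that makes $(\rho([g(k)]))_k$ converge to $\rho([c_{n,m,g}])$ along an infinite subset of the current $A_{n,m,g}$, and makes $(\rho([\tfrac{n!}{S}\chi_n]))_n \to 0$ for all $S$. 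Concatenating these new coordinate functions after $\phi^p$ gives $\phi^q$, and taking $A^q_{n,m,g}$ to be the intersection of the finitely— wait, possibly countably—many infinite sets produced; since only finitely many coordinates are being added when $\alpha + 1 \setminus \alpha^p$ is finite, or one iterates and diagonalizes when it is infinite, one arrives at infinite $A^q_{n,m,g} \subseteq^* A^p_{n,m,g}$ working for every new coordinate at once.

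The main obstacle is the bookkeeping when $\alpha + 1 \setminus \alpha^p$ is infinite (which happens unless $\alpha^p$ is close to $\alpha$): one cannot naively intersect infinitely many infinite sets and stay infinite, so one must instead enumerate the new coordinates in order type $\leq \omega_1$, extend $\phi$ one coordinate at a time, and at each countable stage replace each $A_{n,m,g}$ by the infinite set handed back by Proposition \ref{countable.hom}; at limit stages of countable cofinality one uses a pseudointersection (which exists since the chain of $A_{n,m,g}$'s is a decreasing $\omega$-sequence and, by $\mathrm{CH}$/$\omega_1$-closedness considerations already invoked in the paper, descending $\omega_1$-chains can be handled), and continues. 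Since $\alpha + 1 < \mathfrak{c} \leq \omega_2$ and in fact $\alpha + 1$ has size $\leq \aleph_1$ under $\mathrm{CH}$, this transfinite recursion of length $< \omega_2$ terminates and produces the desired $q \leq p$ with $\alpha^q = \alpha + 1 > \alpha$, so $q \in \mathcal{A}_\alpha$. (In practice the cleanest route is to note $\mathcal{A}_\alpha \supseteq \mathcal{C}_e$-style density is really an iterate of Proposition \ref{countable.hom} exactly as in Proposition \ref{denseinjective}, just without the injectivity clause and with height $\alpha + 1$ in place of $\alpha^p + 1$.)
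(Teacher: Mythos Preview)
Your approach can be made to work, but you have overlooked the trivial solution that the paper uses: simply extend $\phi^p$ by the \emph{zero} homomorphism on all new coordinates. That is, take $E^q = E^p$, $\alpha^q = \alpha + 1$, $\mathcal{G}^q = \mathcal{G}^p$, $c^q = c^p$, $A^q = A^p$ (no shrinking at all), and set $\phi^q = {\phi^p}^\frown \rho$ with $\rho : \mathbf{X}_{E^p} \to \{0\}^{[\alpha^p,\alpha^q)}$ identically zero. The two convergence clauses of Definition~\ref{the.order} are then satisfied automatically on the new coordinates, since the constant sequence $0$ converges to $0 = \rho([c_{n,m,g}])$. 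No appeal to Proposition~\ref{countable.hom}, no shrinking of the $A_{n,m,g}$'s, no pseudointersections, no transfinite recursion.

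Your detour through Proposition~\ref{countable.hom} is not merely longer; it also forces you to invoke CH to control the length of the recursion. As stated, the proposition carries no CH hypothesis, and the paper's proof needs none. Without CH the interval $[\alpha^p,\alpha+1)$ may have size $\geq \aleph_1$, and your scheme of repeatedly shrinking each $A_{n,m,g}$ and diagonalizing would break down: an $\omega_1$-tower of infinite subsets of $\omega$ need not have a pseudointersection, and the $\omega_1$-closedness of $\mathcal{P}$ does not rescue this step. So your argument, as written, proves a weaker statement than the one claimed. The fix is simply to notice that nothing in Definition~\ref{the.order} requires the coordinate homomorphisms to be nontrivial.
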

 
 \begin{proof} Fix $p \in \mathcal P$. If $\alpha < \alpha^p$ then $p \in \mathcal A_\alpha$. So suppose that $\alpha \geq \alpha^p$.
 
 We will define $q$. Set $E^q=E^p$, $\alpha_q=\alpha+1$, $\mathcal G_{n,m}^q=\mathcal G_{n,m}^p$, $c_{n,m,g}^q=c_{n,m,g}^p$, $A_{n,m,g}^q=A_{n,m,g}^p$. Let $\rho: \mathbf{X}_{E^p} \longrightarrow \{0\} ^{[\alpha^p, \alpha_q[}$ and $\phi^q={\phi^p}^\frown \rho$.
 
 Then $q\leq p$ and $q \in \mathcal A_\alpha$.
 \end{proof}
 
  \begin{prop} \label{omega1closed}The partial order $\mathcal P$ is $\omega_1$-closed.
 \end{prop}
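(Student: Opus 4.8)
The plan is to show that every decreasing sequence $(p_i:i\in\omega)$ in $\mathcal P$ has a lower bound, which suffices for $\omega_1$-closedness: a decreasing sequence indexed by a successor ordinal has a least element, and one indexed by a countable limit ordinal has a cofinal $\omega$-subsequence, any lower bound of which is automatically a lower bound of the whole sequence. So I fix $p_0\geq p_1\geq\cdots$ with $p_i=(E^{p_i},\alpha^{p_i},\phi^{p_i},\mathcal G^{p_i},c^{p_i},A^{p_i})$ and build $q$ by taking unions coordinate by coordinate. I put $E^q=\bigcup_i E^{p_i}$ (a countable subset of $2^{\mathfrak c}$ containing $\omega$) and $\alpha^q=\sup_i\alpha^{p_i}$, which is below $\mathfrak c$ because $\cf(\mathfrak c)>\omega$. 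For $n\geq 1$, $m>1$ I set $\mathcal G^q_{n,m}=\bigcup_i\mathcal G^{p_i}_{n,m}$, a countable subset of $\mathcal H$; since membership in each of the eleven types depends only on the supports and values of the $g(k)$ and on the group-theoretic relations among the $[g(k)]$, it is unaffected by passing from $\mathbf X_{E^{p_i}}$ to the larger group $\mathbf X_{E^q}$, so every $g\in\mathcal G^q_{n,m}$ is of the type prescribed in Definition \ref{the.order} with respect to $\mathbf X_{E^q}$. For $g\in\mathcal G^q_{n,m}$ I let $i(g)$ be least with $g\in\mathcal G^{p_{i(g)}}_{n,m}$ and set $c^q_{n,m,g}=c^{p_{i(g)}}_{n,m,g}$; by clause (4) of the order this agrees with $c^{p_i}_{n,m,g}$ for all $i\geq i(g)$ and has the required form because $p_{i(g)}$ does. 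Finally, by clause (5) the sets $(A^{p_i}_{n,m,g}:i\geq i(g))$ form a $\subseteq^*$-decreasing family of infinite sets, and I take $A^q_{n,m,g}$ to be a pseudointersection of it, using that every countable $\subseteq^*$-decreasing family of infinite sets has one.

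To define the homomorphism, I first note that $\mathbf X_{E^q}=\bigcup_i\mathbf X_{E^{p_i}}$, since elements of $\mathbf X$ have finite support. For each $\xi<\alpha^q$ I choose $i^*$ with $\xi<\alpha^{p_{i^*}}$; by clause (6) the homomorphisms $a\mapsto\phi^{p_i}(a)(\xi)$ for $i\geq i^*$ cohere on their increasing domains, and I let $a\mapsto\phi^q(a)(\xi)$ be their common extension to $\mathbf X_{E^q}$. This yields a homomorphism $\phi^q:\mathbf X_{E^q}\to\mathbb T^{\alpha^q}$ satisfying $\phi^q(a)(\xi)=\phi^{p_i}(a)(\xi)$ whenever $\xi<\alpha^{p_i}$ and $a\in\mathbf X_{E^{p_i}}$.

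It then remains to check that $q\in\mathcal P$ and that $q\leq p_i$ for every $i$. Every defining clause of membership in $\mathcal P$ and clauses (1)--(6) of the order are immediate from the construction, so the only real content lies in the two convergence requirements, which I verify coordinatewise, using that convergence in $\mathbb T^{\alpha^q}$ with the product topology is coordinatewise convergence. Fixing $\xi<\alpha^q$ and a triple $(n,m,g)$ with $g\in\mathcal G^q_{n,m}$, I pick $i\geq i(g)$ with $\xi<\alpha^{p_i}$. Since $g\in\mathcal G^{p_i}_{n,m}$, we have $[g(k)]\in\mathbf X_{E^{p_i}}$ for every $k$, and also $c^q_{n,m,g}=c^{p_i}_{n,m,g}\in\mathbf X_{E^{p_i}}$, so on all these elements $\phi^q(\cdot)(\xi)$ coincides with $\phi^{p_i}(\cdot)(\xi)$. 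The condition on $p_i$ gives $(\phi^{p_i}([g(k)])(\xi))_{k\in A^{p_i}_{n,m,g}}\to\phi^{p_i}(c^{p_i}_{n,m,g})(\xi)$, and since $A^q_{n,m,g}\subseteq^*A^{p_i}_{n,m,g}$, restricting the index set preserves the limit; hence $(\phi^q([g(k)])(\xi))_{k\in A^q_{n,m,g}}\to\phi^q(c^q_{n,m,g})(\xi)$. As $\xi$ was arbitrary, $(\phi^q([g(k)]))_{k\in A^q_{n,m,g}}\to\phi^q(c^q_{n,m,g})$. The convergence of $(\phi^q([\tfrac{1}{N}\chi_n]))_{n\in\omega}$ to $0$ for every $N\geq 1$ is checked in the same way, using $\omega\subseteq E^{p_i}$. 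Thus $q$ is a lower bound for the chain.

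The construction is essentially bookkeeping, and I expect the only points needing a little care to be the invariance of the eleven types under enlarging the index set $E$ (so that $\mathcal G^q$ is a legitimate component of a condition) and the choice of each $A^q_{n,m,g}$ as a pseudointersection of a countable $\subseteq^*$-decreasing family of infinite sets — the latter being exactly what makes the convergence clauses persist in the limit.
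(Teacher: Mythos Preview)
Your proof is correct and follows essentially the same approach as the paper: take unions for $E$ and the $\mathcal G_{n,m}$, the supremum for $\alpha$, preserve the $c$'s by compatibility, choose pseudointersections for the $A$'s, and glue the homomorphisms coordinatewise. Your version is in fact more detailed than the paper's, which omits the verification of the convergence clauses and of type-invariance under enlarging $E$; both of these you handle correctly.
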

 \begin{proof} Fix a decreasing sequence $(p_t:\, t<\omega)$. Write $p_t=(E^t, \alpha^t, \phi^t, \mathcal G^t, c^\xi, A^t)$. We define a common extension $r$ as follows:
 
 Let $E^r=\bigcup\{ E^{t}:\, t<\omega\}$,   $\mathcal G_{n,m}^r= \bigcup\{ \mathcal G_{n,m}^{t} :\, t \in \omega\}$ for each $n\geq 1$ and $m>1$.  For each $n\geq 1$, $m>1$ and $g \in \mathcal G_{n,m}^r$, define $c_{n,m,g}^r=c_{n,m,g}^{t}$ for some (every) $t$ such that $g \in \mathcal G_{n,m}^{t}$ (the value does not depend of $t$). Fix $A_{n,m,g}^{r}$ a pseudointersection of $\{A_{n,m,g}^{t}:\, g \in \mathcal G_{p_t, n,m} \}$.
 
 Let $\alpha^r=\sup\{\alpha^t: t<\omega\}$

Given $\xi<\alpha$ and $a \in \mathbf X_{E^r}=\bigcup_{t<\omega}\mathbf X_{E^t}$, let $\phi(a)(\xi)=\phi^t(a)(\xi)$ for some (every) $t$ such that $a \in X_{E^t}$ and $\alpha^t>\xi$.
 \end{proof}
 
 \begin{prop}The partial order $\mathcal P$ has the $\mathfrak c^+$-cc.\label{cpluscc}
 \end{prop}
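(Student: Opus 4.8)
The goal is to show that $\mathcal P$ has the $\mathfrak c^+$-c.c., i.e. every antichain has size at most $\mathfrak c$; equivalently, any family of $\mathfrak c^+$ conditions contains two compatible ones. The plan is a standard $\Delta$-system argument tuned to the specific structure of a condition $p = (E^p, \alpha^p, \phi^p, \mathcal G^p, c^p, A^p)$. First I would fix a family $\{p_\xi : \xi < \mathfrak c^+\}$ of conditions. Since each $E^{p_\xi}$ is a countable subset of $2^{\mathfrak c}$ and, assuming CH in the ground model so that $\mathfrak c^{\aleph_0} = \mathfrak c$ and hence $(\mathfrak c^+)$ is regular with $\mathrm{cf}(\mathfrak c^+) = \mathfrak c^+ > \mathfrak c = \mathfrak c^{\aleph_0}$, I can apply the $\Delta$-system lemma to $\{E^{p_\xi} : \xi < \mathfrak c^+\}$ to obtain a subfamily of size $\mathfrak c^+$ whose domains form a $\Delta$-system with root $R$. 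By further shrinking (there are only $\le \mathfrak c$ possibilities at each step, against a set of size $\mathfrak c^+$), I may assume: the order type of $E^{p_\xi}$ is fixed; the canonical order isomorphism between any two $E^{p_\xi}, E^{p_\eta}$ is the identity on $R$; $\alpha^{p_\xi} = \alpha^*$ is fixed (this is legitimate because $\alpha^p < \mathfrak c$); the restriction $\phi^{p_\xi}\!\restriction_{\mathbf X_R}$ is the same homomorphism $\phi^* : \mathbf X_R \to \mathbb T^{\alpha^*}$ for all $\xi$ (there are at most $|\mathbb T^{\alpha^*}|^{|\mathbf X_R|} \le (2^{\aleph_0})^{\aleph_0} = \mathfrak c$ such restrictions); and the families $\mathcal G^{p_\xi}, c^{p_\xi}, A^{p_\xi}$ are, modulo the canonical isomorphism, ``the same'' — here one uses that each $\mathcal G_{n,m}^{p_\xi}$ is a countable set of sequences $\omega \to \mathbf X_{E^{p_\xi}}$ and each such sequence is coded by a countable amount of information living on a countable set, so there are $\le \mathfrak c$ isomorphism types.

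Next I would fix two indices $\xi \neq \eta$ from the surviving family and build a common extension $q$. Put $E^q = E^{p_\xi} \cup E^{p_\eta}$, $\alpha^q = \alpha^* $, $\mathcal G^q_{n,m} = \mathcal G^{p_\xi}_{n,m} \cup \mathcal G^{p_\eta}_{n,m}$, and let $c^q, A^q$ be the union of the two families of parameters (consistency on overlapping indices is guaranteed by the normalization, since on the root everything agrees). The only real work is producing $\phi^q : \mathbf X_{E^q} \to \mathbb T^{\alpha^*}$ extending both $\phi^{p_\xi}$ and $\phi^{p_\eta}$ and satisfying the convergence requirements in Definition \ref{the.order}. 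Since $\phi^{p_\xi}$ and $\phi^{p_\eta}$ agree on $\mathbf X_R$ and $R = E^{p_\xi} \cap E^{p_\eta}$, the two homomorphisms already glue to a well-defined homomorphism $\psi$ on the subgroup $\mathbf X_{E^{p_\xi}} + \mathbf X_{E^{p_\eta}}$ of $\mathbf X_{E^q}$; then extend $\psi$ to all of $\mathbf X_{E^q}$ by divisibility of $\mathbb T^{\alpha^*}$ (the injectivity/divisibility extension result quoted earlier in the paper). The convergence clauses — $(\phi^q([g(k)]))_{k \in A^q_{n,m,g}}$ converges to $\phi^q(c^q_{n,m,g})$, and $(\phi^q([\tfrac{1}{N}\chi_n]))_n \to 0$ — hold automatically: for indices $g \in \mathcal G^{p_\xi}_{n,m}$ the relevant support of $g$, of $c^q_{n,m,g}$, and the relevant sequence $(\chi_n)$ all lie inside $\mathbf X_{E^{p_\xi}}$ (note $\omega \subseteq E^{p_\xi}$), so $\phi^q$ restricted there is just $\phi^{p_\xi}$ and the clause is inherited; symmetrically for $\eta$. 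Hence $q \le p_\xi$ and $q \le p_\eta$, so $p_\xi$ and $p_\eta$ are compatible.

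The step I expect to be the main obstacle — or at least the one requiring care — is the counting in the normalization phase, specifically verifying that modulo the canonical order isomorphism between two root-matched copies of $E^{p_\xi}$ there are only $\mathfrak c$ possible ``isomorphism types'' of the whole tuple $(\mathcal G^{p_\xi}, c^{p_\xi}, A^{p_\xi}, \phi^{p_\xi})$. Here one must be a little careful that the elements $g(k) \in \mathbf X_{E^{p_\xi}}$ are functions on the whole of $P_0 \times \omega \cup P_1 \cup R_0 \cup R_1$ but with finite support contained (essentially) in $\mathbf X_{E^{p_\xi}}$, so each is coded by a finite partial function into $\mathbb Q/\mathbb Z \cup \mathbb Q$; a countable sequence of these is a countable object, and there are $\mathfrak c^{\aleph_0} = \mathfrak c$ of them once we've fixed which coordinates of $E^{p_\xi}$ (identified via the isomorphism with a fixed countable ordinal pattern) are used. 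Likewise $c^{p_\xi}$ is a countable family of finitely-supported elements and $A^{p_\xi}$ a countable family of subsets of $\omega$, each contributing only $\mathfrak c$ possibilities. I would also double-check the compatibility of the $c$-parameters on overlapping indices: clauses of Definition \ref{the.order} force $c_{n,m,g}$ (for $n,m \ge 2$) to be $[\tfrac1n\chi_{(\mu,0)}]$ with $\mu \in C_{n,m}$, and for $n=1, m\ge 2$ to be $[\chi_{(\mu)}]$ with $\mu \in C_{1,m}$; after normalization these witnesses $\mu$ must match on the root, so the union is a legitimate family. Once all of this bookkeeping is in place the compatibility is immediate, giving the $\mathfrak c^+$-c.c.
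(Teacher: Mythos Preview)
Your approach is correct and follows the same $\Delta$-system-plus-pigeonhole strategy as the paper. The paper's organization differs slightly: instead of a single normalization of isomorphism types, it applies a \emph{second} $\Delta$-system directly to the index sets $J^p=\{(n,m,g):g\in\mathcal G^p_{n,m}\}$, obtaining a root $\tilde J$, and then pigeonholes on the actual values $(c^p_{n,m,g})_{(n,m,g)\in\tilde J}$ and $(A^p_{n,m,g})_{(n,m,g)\in\tilde J}$ to make them literally equal on the overlap; this avoids any discussion of whether the canonical order isomorphism on the $E$'s respects the partition $\{C_{n,m}\}$. The paper also uses the direct-sum decomposition $\mathbf X_{E^q}=\mathbf X_{E^{p_\xi}\setminus R}\oplus\mathbf X_R\oplus\mathbf X_{E^{p_\eta}\setminus R}$, so your glued homomorphism $\psi$ is already defined on all of $\mathbf X_{E^q}$ and no extension by divisibility is needed.

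There is one step you assert without justification: that for a root-supported $g$ (i.e.\ $g\in\mathcal G^{p_\xi}_{n,m}\cap\mathcal G^{p_\eta}_{n,m}$) the witnesses $\mu$ for $c_{n,m,g}$ ``must match on the root'' after normalization. Fixing the isomorphism type only yields $\sigma(\mu^{p_\xi})=\mu^{p_\eta}$; if $\mu^{p_\xi}\in E^{p_\xi}\setminus R$ this does \emph{not} give $c^{p_\xi}_{n,m,g}=c^{p_\eta}_{n,m,g}$, which is precisely what clause (4) of the order requires of any common extension. The missing observation is that each such $\mu$ lies in $C_{n,m}\subseteq P_0\subseteq\mathfrak c$ (respectively $C_{1,m}\subseteq P_1\subseteq\mathfrak c$), so for a fixed root-supported $g$ there are only $\mathfrak c$ possible values of $c^p_{n,m,g}$; a further pigeonhole over the countably many root-supported $g$'s then makes the $c$'s literally equal (and forces those $\mu$'s into $R$ a posteriori). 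The paper's direct pigeonhole on $c^p\!\restriction_{\tilde J}$ is exactly this step, packaged differently.
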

 \begin{proof} Fix an arbitrary subset $\mathcal Q$ of $\mathcal P$ of cardinality $\mathfrak c^+$. We show that there $\mathcal Q$ has a subset of $\mathfrak c^+$-many pairwise compatible elements.
 
Fix $\mathcal Q_0\subseteq \mathcal Q$ of cardinality $\mathfrak c^+$ and $\alpha <\mathfrak c$ such that $\alpha^p=\alpha^q$ for every $p, q \in \mathcal Q_0$.
 
 Using the $\Delta$-system Lemma, there exists 
 $\mathcal Q_1\subseteq \mathcal Q_0$ of cardinality $\mathfrak c^+$ such that $\{E^p:p \in \mathcal Q_1\}$ is a $\Delta$-system of root $\tilde E$. Furthermore, using the fact that $\tilde{E}^\omega$
 has cardinality at most $\mathfrak c$, it follows that there exists $\mathcal Q_2\subseteq \mathcal Q_1$ of cardinality $\mathfrak c^+$ such that $\phi^p|_{{\mathbf X}_{\tilde E}}=\phi^q|_{{\mathbf X}_{\tilde E}}$ for every $p, q \in \mathcal Q_2$.
 
 For each $p \in \mathcal Q_2$, let  $J^p=\{(n,m,g):\, n, m \geq 1, g \in \mathcal G_{n,m}^p\}$ for each $p \in \mathcal Q_2$. Using the $\Delta$-system Lemma, we can find $\mathcal Q_3\subseteq \mathcal Q_2$ of cardinality $\mathfrak c^+$ such that  $\{J^p:\, p \in \mathcal Q_3\}$ is a delta system of root $\tilde J$.
 
 Notice that $\mathbf{X}_{\tilde E}^{\tilde J}$ has cardinality $\mathfrak c$, so there exists $\mathcal Q_4\subseteq \mathcal Q_3$ of cardinality $\mathfrak c^+$ such that for every $p, q \in \mathcal Q_4$ and $(n, m, g) \in \tilde J=J^p\cap J^q$, $c^p_{n, m, g}=c^q_{n, m, g}$. Similarly, since
 ${([\omega]^\omega)}^{\tilde J}$ has cardinality $\mathfrak c$, there exists $\mathcal Q_5\subseteq \mathcal Q_4$ of cardinality $\mathfrak c^+$ such that for every $p, q \in \mathcal Q_5$ and $(n, m, g) \in \tilde J=J^p\cap J^q$, $A^p_{n, m, g}=A^q_{n, m, g}$.
 
Given $p, q \in \mathcal Q_5$, a common extension is given by the element $r$ whose components are defined as follows: $E^r=E^p\cup E^q$, $\alpha^r=\alpha^q=\alpha^p$, $\mathcal G_{n, m}^r=\mathcal G_{n, m}^p\cup \mathcal G_{n, m}^q$, $A_{n, m, g}^r=A_{n, m, g}^s$ and $c^r_{n, m, g}=c^s_{n, m, g}$ if $(n, m, g) \in J^s$ (where $s \in \{p, q\}$).
 
 To define $\phi^r$, notice that $\mathbf X_{E^p\setminus \tilde E}\oplus \mathbf X_{\tilde E}\oplus \mathbf X_{E^q\setminus \tilde E}$. Let $\pi_0:\mathbf X_{E^r}\to \mathbf X_{E^p\setminus \tilde E}$, $\pi_1:\mathbf X_{E^r}\to \mathbf X_{\tilde E}$, $\pi_2:\mathbf X_{E^r}\to \mathbf X_{E^q\setminus \tilde E}$ be the projections. Define $\phi^r=\phi^p\circ \pi_0+\phi^p\circ \pi_1+\phi^q\circ \pi_2=\phi^p\circ \pi_0+\phi^q\circ \pi_1+\phi^q\circ \pi_2$.
 \end{proof}
 
 \begin{prop}\label{typedense} Let $g$ be sequence of one of the types of $\mathbf X$ and $m> 1$. If $g$ is of types 1 to 10, let $n=1$. If $g$ is type 11, let or $n$ the order of $g$.
 Then $\mathcal S_{n,m,g}= \{p\in \mathcal P:\, g \in \mathcal G_{n,m}^p\}$ is open and dense in $\mathcal P$.
 \end{prop}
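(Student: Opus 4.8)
The approach is to mimic the proof of Proposition~\ref{denseinjective} almost verbatim, with the extra bookkeeping of inserting the single sequence $g$ into $\mathcal G_{n,m}$ and committing to a constant $c_{n,m,g}$ of the prescribed rigid form. Openness is immediate from clause (3) of the definition of $\le$, since $\mathcal G_{n,m}$ only grows along the order. For density, fix $p\in\mathcal P$; if $g\in\mathcal G^p_{n,m}$ we are done, so assume not, and we build $q\le p$ with $g\in\mathcal G^q_{n,m}$.

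First I would fix the new constant. If $g$ is of one of the types 1--10 (so $n=1$), use $|C_{1,m}|=\mathfrak c$ and the countability of $E^p$ to pick $\mu\in C_{1,m}$ whose coordinate lies outside $E^p$, and put $c^q_{n,m,g}=[\chi_{(\mu)}]$; since $\mu\in C_{1,m}\subseteq P_1$, this element is non-torsion. If $g$ is of type $11$ and order $n$, pick $\mu\in C_{n,m}$ with $(\mu,0)\notin E^p$ and put $c^q_{n,m,g}=[\tfrac{1}{n}\chi_{(\mu,0)}]$, which has order exactly $n$, the common order of the terms $[g(k)]$. Now set $E^q=E^p\cup\supp g\cup\supp c^q_{n,m,g}$, $\alpha^q=\alpha^p+1$, $\mathcal G^q_{n,m}=\mathcal G^p_{n,m}\cup\{g\}$ with $\mathcal G^q_{n',m'}=\mathcal G^p_{n',m'}$ for $(n',m')\ne(n,m)$, and $c^q_{n',m',g'}=c^p_{n',m',g'}$ for all previously indexed triples. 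The new entry is legal because $g\in\mathcal H_{\mathbf X_{E^q}}=(\mathbf X_{E^q})^\omega\cap\mathcal H$, using $g\in\mathcal H$ and $\supp g\subseteq E^q$.

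Next, extend $\phi^p$ to a homomorphism $\widetilde\phi\colon\mathbf X_{E^q}\to\mathbb T^{\alpha^p}$ by divisibility of $\mathbb T$, and apply Proposition~\ref{countable.hom} to $E^q$ with an arbitrary nonzero $e\in\mathbf X_{E^q}$, the countable family $\bigcup_{n'\ge 1,\,m'>1}\mathcal G^q_{n',m'}$ indexed by the triples $(n',m',g')$ with $g'\in\mathcal G^q_{n',m'}$, the input sets $A^p_{n',m',g'}$ for the old triples and $\omega$ for the new triple $(n,m,g)$, and the constants $c^q_{n',m',g'}$. The hypothesis on the constants is met: for the old triples it is inherited from $p$, and for $(n,m,g)$ it holds by the choice above (non-torsion for types 1--10, order $n$ for type $11$). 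This yields $\rho\colon\mathbf X_{E^q}\to\mathbb T$ and infinite sets $B_{n',m',g'}$ contained in the corresponding input sets, with $(\rho([g'(k)]))_{k\in B_{n',m',g'}}$ converging to $\rho(c^q_{n',m',g'})$ for every triple and $(\rho(\tfrac{k!}{S}\chi_k))_{k\in\omega}$ converging to $0$ for every positive integer $S$. Finally set $\phi^q={\widetilde\phi}^\frown\rho$ (that is, $\phi^q(a)(\xi)=\widetilde\phi(a)(\xi)$ for $\xi<\alpha^p$ and $\phi^q(a)(\alpha^p)=\rho(a)$) and $A^q_{n',m',g'}=B_{n',m',g'}$ for every triple, including the new one.

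It then remains to check $q\in\mathcal P$ and $q\le p$, which is pure verification. For $q\in\mathcal P$ the only non-formal clauses are the two convergence requirements: on the coordinates $\xi<\alpha^p$ they hold since $\phi^q$ agrees there with $\phi^p$ on $\mathbf X_{E^p}$ and $A^q_{n',m',g'}$ is an infinite subset of a set along which the relevant sequence already converges; on the coordinate $\alpha^p$ they are precisely conclusions (2) and (3) of Proposition~\ref{countable.hom}. For $q\le p$, clauses (1)--(6) hold by construction: $E^q\supseteq E^p$, $\alpha^q>\alpha^p$, $\mathcal G^q_{n',m'}\supseteq\mathcal G^p_{n',m'}$, $c^q$ and $A^q$ restrict correctly on the old triples (the latter up to finite sets), and $\phi^q$ agrees with $\phi^p$ below $\alpha^p$. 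Since $g\in\mathcal G^q_{n,m}$, we conclude $q\in\mathcal S_{n,m,g}$. There is no genuine obstacle here; the argument is essentially the proof of Proposition~\ref{denseinjective} with one extra sequence adjoined, and the only delicate point is to choose $c^q_{n,m,g}$ so that it simultaneously has the rigid shape demanded by Definition~\ref{the.order} and the torsion behaviour demanded by Proposition~\ref{countable.hom}.
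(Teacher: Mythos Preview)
Your proof has a genuine gap in the verification that $q\in\mathcal P$. The convergence clause must hold for \emph{every} triple in $\mathcal G^q$, including the newly inserted $(n,m,g)$, and it must hold on \emph{all} coordinates $\xi<\alpha^q$. Proposition~\ref{countable.hom} only controls the single new coordinate $\alpha^p$ (via $\rho$); it says nothing about $\widetilde\phi$. Your verification for the old coordinates reads ``$A^q_{n',m',g'}$ is an infinite subset of a set along which the relevant sequence already converges'', but this is only true for the \emph{old} triples. For the new triple there was no such pre-existing convergence: you extended $\phi^p$ to $\widetilde\phi$ by bare divisibility, so the value $\widetilde\phi(c^q_{n,m,g})$ is essentially arbitrary, and there is no reason why $(\widetilde\phi([g(k)]))_{k\in B_{n,m,g}}$ should converge to it on the coordinates $\xi<\alpha^p$.

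The paper's proof avoids this problem by a different, simpler mechanism. It does \emph{not} add a coordinate (it keeps $\alpha^q=\alpha^p$) and does not invoke Proposition~\ref{countable.hom} at all. Instead it first extends $\phi^p$ to $\mathbf X_E$ (with $\supp g\subseteq E$ but $\mu\notin E$), then uses that $\mathbb T^{\alpha^p}$ is compact metrizable (since $\alpha^p<\omega_1$) to extract an infinite $A\subseteq\omega$ along which $(\phi([g(k)]))_{k\in A}$ converges, and only \emph{then} extends $\phi$ to $\mathbf X_{E^q}$ by sending the fresh generator $c^q_{n,m,g}$ to that limit. The freshness of $\mu$ is precisely what makes this last extension possible. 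Your argument can be repaired by inserting this compactness step (choose $A$ first, then define $\widetilde\phi$ on $c^q_{n,m,g}$ to match the limit, then feed $A$ rather than $\omega$ into Proposition~\ref{countable.hom}), but as written it does not go through.
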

 
 \begin{proof} Let $p \in \mathcal P$ be an arbitrary condition. Fix $E$ countable such that $E^p \subseteq E$ and
 $g(k) \in \mathbf{X}_E$ for each $k \in \omega$.
 
 Fix $\mu \in C_{n,m} \setminus E$. We set $E^q=E \cup \{\mu\}$. For each $(m',n') \neq (m,n)$ with $m',n' \geq 1$,
 define $\mathcal G_{n',m'}^q=\mathcal G_{n',m'}^p$ and $\mathcal G_{n,m}^q=\mathcal G_{n,m}^p \cup \{g\}$. Set $\alpha^q=\alpha^p$.
 
 For every  $n'\geq 1$, $m'>1$ and $g' \in \mathcal G_{p,n',m'}\setminus\{g\}$,
 define $c_{n',m',g'}^q=c_{n',m',g'}^p$ and $A_{n',m',g'}^q=A_{n',m',g'}^p$. It remains to define $c_{n,m,g}^q$ and $A_{n,m,g}^q$. Let $c_{n,m,g}^q= \frac{1}{n}\chi_{(\mu,0)}$ if $n>1$ and $\chi_\mu$ if $n=1$.
 
 Extend $\phi_p:\, \mathbf{X}_{E^p} \to \mathbb T^{\alpha^p}$ to $\phi:\, \mathbf{X}_{E}\to  \mathbb T^{\alpha^p}$ using divisibility. Now, let $A \subseteq \omega$ be an infinite such that the sequence $(\phi([g(k)]):\, k \in A)$ is convergent, as $\mathbb T^{\alpha^p}$ is a compact metric space. Extend $\phi$ to a homomorphism 
 $\phi^q: \mathbf{X}_{E^q} \to \mathbb T^{\alpha^p}$ such that $\phi^q([c_{q,n,m,g}])= \lim (\phi([g(k)]):\, k \in A)$. Set $A_{n,m,g}^q=A$. Then $q\leq p$ and $q \in \mathcal S_{n,m,g}$.
 \end{proof}

 \begin{teo} \label{main.forcing}
Assume CH in the ground model $\mathbf V$. Then $\mathcal P$ preserves cardinals, $\mathfrak c$ and does not add reals. If $G$ is generic over $\mathcal P$, then the $G$-generic homomorphism $\Phi$ is a well defined injective homomorphism from $\mathbf X$ into $\mathbb T^{\mathfrak c}$. Moreover, the following holds:
\begin{enumerate}
    \item For every sequence $g$ of one of the types from $1$ to $10$ in $\mathbf X$ and $m\geq 1$, there exists $\mu \in C_{1,m}$ such that $[\chi_{\mu}]$ is an accumulation point of $(\Phi([g(k)]):\, k \in \omega)$
    \item For every sequence $g$ of type 11 and order $n$ in $\mathbf X$ and for every $m\geq 1$, there exists $\mu \in C_{n,m}$ such that $[\frac{1}{n} \chi_{\mu,0}] $  is an accumulation point of $(\phi([g(k)]):\, k \in \omega)$. 
    \item $(\Phi([\frac{1}{N}\chi_n)])_{n\in \omega}$ converges to $0 \in \mathbf X$, for every natural $N\geq 1$.
\end{enumerate}
    
\end{teo}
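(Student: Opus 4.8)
The plan is to derive the whole theorem from the basic properties of $\mathcal P$ and the density lemmas already proved, by a genericity argument whose only real point is that a basic open subset of $\mathbb T^{\mathfrak c}$ constrains only finitely many coordinates. First I would dispose of the forcing-theoretic assertions. By Proposition~\ref{omega1closed} the poset $\mathcal P$ is $\omega_1$-closed, so it adds no new $\omega$-sequence of ordinals; in particular it adds no reals and preserves $\omega_1$. By Proposition~\ref{cpluscc} it has the $\mathfrak c^+$-cc, which under CH is the $\omega_2$-cc, hence it preserves all cardinals $\geq\omega_2$. Together these give that $\mathcal P$ preserves all cardinals; and since no reals are added and $\mathfrak c=\omega_1$ in $\mathbf V$, also $\mathfrak c=\omega_1$ in the extension, so $\mathfrak c$ is preserved.

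Next I would check that the generic homomorphism $\Phi$ is a well-defined injective homomorphism $\mathbf X\to\mathbb T^{\mathfrak c}$. It is well defined because any two conditions in the generic filter $G$ have a common extension in $G$, so the partial maps $\phi^p$, $p\in G$, are pairwise compatible by clause (6) of the order. For a nonzero $e\in\mathbf X$, $G$ meets the dense set $\mathcal C_e$ of Proposition~\ref{denseinjective}, so there is $p\in G$ with $e\in\mathbf X_{E^p}$ and $\phi^p(e)\neq 0$; hence $\dom\Phi=\mathbf X$ and $\ker\Phi=\{0\}$. For each $\alpha<\mathfrak c$, $G$ meets $\mathcal A_\alpha$ (Proposition~\ref{densecodomain}), so every coordinate $\xi<\mathfrak c$ belongs to the domain of some $\phi^p$ with $p\in G$; thus each $\Phi(a)$ is a function with domain exactly $\mathfrak c$, i.e.\ $\Phi$ maps into $\mathbb T^{\mathfrak c}$. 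Finally, given $a,b\in\mathbf X$ and $\xi<\mathfrak c$, directedness of $G$ together with the three families of dense sets just used produces $p\in G$ with $a,b\in\mathbf X_{E^p}$ and $\alpha^p>\xi$; then $\Phi(a+b)(\xi)=\phi^p(a+b)(\xi)=\phi^p(a)(\xi)+\phi^p(b)(\xi)=\Phi(a)(\xi)+\Phi(b)(\xi)$, and as $\xi$ was arbitrary, $\Phi$ is a homomorphism.

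For items (1)--(3) I would work coordinatewise, using that a basic open neighborhood of a point of $\mathbb T^{\mathfrak c}$ has the form $\prod_{\xi<\mathfrak c}U_\xi$ with $U_\xi=\mathbb T$ off a finite set $F_0\subseteq\mathfrak c$. Fix $g$ of one of the types of $\mathbf X$, let $n$ be as in the statement ($n=1$ for types $1$--$10$, and $n$ the order of $g$ for type $11$) and fix $m$. Since $G$ meets $\mathcal S_{n,m,g}$ (Proposition~\ref{typedense}), there is $p_0\in G$ with $g\in\mathcal G_{n,m}^{p_0}$, and by the definition of $\mathcal P$ the element $c:=c_{n,m,g}^{p_0}$ equals $[\chi_\mu]$ for some $\mu\in C_{1,m}$ (types $1$--$10$) or $[\frac1n\chi_{(\mu,0)}]$ for some $\mu\in C_{n,m}$ (type $11$). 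To see that $\Phi(c)$ is an accumulation point of $(\Phi([g(k)]):k\in\omega)$ --- equivalently, that $[\chi_\mu]$, resp.\ $[\frac1n\chi_{(\mu,0)}]$, is an accumulation point of $([g(k)]:k\in\omega)$ for the topology $\mathbf X$ inherits from $\Phi$ --- take a basic neighborhood $U=\prod_\xi U_\xi$ of $\Phi(c)$ as above, pick $\xi_0<\mathfrak c$ with $F_0\subseteq\xi_0$, and use directedness of $G$ and density of $\mathcal A_{\xi_0}$ to get $p\in G$ with $p\leq p_0$ and $\alpha^p>\xi_0$. Then $g\in\mathcal G_{n,m}^p$ and $c_{n,m,g}^p=c$, and by the definition of $\mathcal P$ the sequence $(\phi^p([g(k)]):k\in A_{n,m,g}^p)$ converges to $\phi^p(c)$ in the compact metric group $\mathbb T^{\alpha^p}$ (metrizable because $\alpha^p<\mathfrak c=\omega_1$ is countable). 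Since $A_{n,m,g}^p$ is infinite and $F_0\subseteq\alpha^p$, all but finitely many $k\in A_{n,m,g}^p$ satisfy $\Phi([g(k)])(\xi)=\phi^p([g(k)])(\xi)\in U_\xi$ for every $\xi\in F_0$, i.e.\ $\Phi([g(k)])\in U$; hence $U$ contains $\Phi([g(k)])$ for infinitely many $k$, proving (1) and (2). Item (3) is the same computation applied to the sequences $(\Phi([\frac1N\chi_n]):n\in\omega)$, for which every condition of $\mathcal P$ already forces convergence to $0$ in its power of $\mathbb T$; there one reads off convergence (``all but finitely many $n$'') rather than mere accumulation.

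The serious work has already been done: the construction of the countable partial homomorphisms realizing the prescribed limits (Proposition~\ref{countable.hom}) and the density of $\mathcal C_e$, $\mathcal A_\alpha$ and $\mathcal S_{n,m,g}$ (Propositions~\ref{denseinjective}, \ref{densecodomain}, \ref{typedense}). The only step in the present theorem needing a moment's thought is the finiteness-of-support point above: because a basic open set of $\mathbb T^{\mathfrak c}$ involves only finitely many coordinates, a single condition $p\in G$ with $\alpha^p$ past those coordinates already carries all the convergence data required, so one never needs a single infinite $B\subseteq\omega$ that works at all coordinates at once; everything else is routine bookkeeping with the generic filter and the compatibility clauses of the order.
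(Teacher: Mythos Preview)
Your proof is correct and follows essentially the same route as the paper's own proof: the same combination of $\omega_1$-closure and $\mathfrak c^+$-cc for cardinal preservation, the same appeals to $\mathcal C_e$, $\mathcal A_\alpha$, $\mathcal S_{n,m,g}$ for well-definedness, injectivity, codomain and the accumulation points, and the same finite-support argument at the end. The only point you leave implicit is that, because $\mathcal P$ adds no new $\omega$-sequences, every sequence $g:\omega\to\mathbb X$ in the extension already lies in the ground model (and its type is unchanged), so the density of $\mathcal S_{n,m,g}$ in $\mathbf V$ suffices; the paper makes this explicit by invoking absoluteness of the types.
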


\begin{proof}
 By CH, propositions \ref{omega1closed} and \ref{cpluscc}, $\mathcal P$ is $\omega_1$ closed and has the $\omega_2$ chain condition, so $\mathcal P$ preserves cardinals, does not add reals and preserves $\mathfrak c$. Notice that since being a type is absolute for transitive models of ZFC, the functions of type 1 to 11 are the same in the ground model and in the extension.
 
 Let $G$ be a $\mathcal P$-generic filter and $\Phi$ the associated generic homomorphism. 
 
 $\Phi$ is well defined: suppose $p, q \in G$, $\xi<\alpha^p\cap \alpha^q$ and $e \in \mathbf X_{E^p}\cap \mathbf X_{E^q}$. We must see that $\phi^p(e)(\xi)=\phi^q(e)(\xi)$. Since $G$ is a filter, there exists $r$ such that $r\leq p, q$, so $\xi<\alpha^r$, $e \in \mathbf X_{E^r}$ and $\phi^p(e)(\xi)=\phi^r(e)(\xi)=\phi^q(e)(\xi)$

Now we verify that the domain of $\Phi$ is $\mathbf X$, that the codomain is $\mathbb T^\mathfrak c$ and that $\Phi$ is injective at the same time. It is clearly that the domain contained in $\mathbf X$. Let $e\neq 0$ be an element of $X$ and $\alpha<\mathfrak c$. By propositions \ref{denseinjective} and \ref{densecodomain},
$\mathcal C_e$ and $\mathcal A_\alpha$ are open and dense subsets of $\mathcal P$, therefore there exists $p \in G$ such that $\alpha^p>\alpha$ $e \in \mathbf X_{E^p}$, $\phi^p(e)\neq 0$. So there exists $\xi<\alpha^p$ such that $\phi^p(e)(\xi)\neq 0$, which implies that $\Phi(e)(\xi)\neq 0$. Moreover, $\alpha \subseteq \dom \Phi(e)\subseteq \mathfrak c$. Since $\alpha$ is arbitrary, $\dom \Phi(e)=\mathfrak c$.

$\Phi$ is an homomorphism: given $e, e' \in \mathbf X$, by \ref{denseinjective} there exists $p \in G$ such that $e, e', e+e' \in \mathbf X_{E^p}$. Since $\phi^p$ is an homomorphism, it follows that $\Phi(e+e')=\phi^p(e+e')=\phi^p(e)+\phi^p(e')=\Phi(e)+\Phi(e')$.

 Let $g$ be a type and $m> 1$. If $g$ is of type 1 to 10, let $n=1$. If  $g$ is type 11, let $n$ be the order of $g$.
 Then by Proposition \ref{typedense}, there exists $G \cap \mathcal S_{n,m,g}$. Fix $p$ in this intersection. We claim $\Phi(c_{p,n,m,g})$ is an accumulation point of $\Phi([g])$.
 
 We know $\phi_p (c_{n,m,g}^p)$ is the limit of the convergent sequence $(\phi_p([g(k)]):\, k\in A_{n,m,g}^p)$. Fix $F$ a finite subset of $\mathfrak c$ and let $\alpha $ such that $F$ is a subset of $\alpha$. Let $q \leq p$ such that $\alpha < \alpha^q$ (which exists since $\mathcal A_{\alpha +1}  \cap G\neq \emptyset$). Then $( \pi_F \circ \Phi ([g(k)]):\, k \in A_{n,m,g}^q)$ converges to $\pi_F \circ \Phi(c_{n,m,g}^q)$. Since $c_{n,m,g}^q=c_{p,n,m,g}^q$, this concludes that 
 $\Phi(c_{n,m,g}^p)$ is an accumulation point of $(\Phi([g(k)]):\, k \in \omega )$.
 
It remains to see that $(\Phi([\frac{n!}{S}\chi_n]):\, n \in \omega)$ is a convergent sequence in $0 \in \mathbb T^{\omega_1}$. Let $\xi<\mathfrak c$.  Let $p\in G$ such that $\alpha^p >\xi$. Then  $(\pi_\xi \circ \Phi([\frac{n!}{S}\chi_n]):\, n \in \omega)= (\pi_\xi \circ \phi_p([\frac{n!}{S}\chi_n]):\, n \in \omega)$ converges to $0$. Since $\xi$ is arbitrary, we are done.
 \end{proof}

 \subsection{The subspace topology on large subgroups of \texorpdfstring{$\mathbf X$}{X}}
 
 Of course, not every subgroup of $\mathbf X$ is countably compact with the forced topology. However, some of them are if they have enough accumulation points. Thus, we define the concept of \textit{large subgroup} of $\mathbf X$.

\begin{defin} \label{large.subgroup}
Let $H$ be a subgroup of $\mathbf X$. Let $D$ be the set of all integers $n > 1$ such that $H$ contains an isomorphic copy of the group $\mathbb{Z}_{n}^{(\mathfrak{c})}$.

We say that $H$ is a \textit{large subgroup of} $\mathbf X$ if $2^{\mathfrak c}\geq |G| \geq r(G)\geq \mathfrak c$, for all $d, n \in \mathbb{N}$ with $d \mid n$ the group $dG[n]$ is either finite or has cardinality at least $\mathfrak{c}$ and there exist $(k_n:\, n \in D)$  with $k_n$ a positive integer such that:

\begin{enumerate}[label=\roman*)]
\item $\{ \chi_{\mu} \in \mathbf{X} : \mu \in  C_1\} \subseteq H$, and
\item $\{[\frac{1}{n}\chi_{(\mu,0)}] \in \mathbf{X} : n \in D,\,\mu \in \cup_{n \in D} C_{n,k_n}\} \subseteq H$.
\end{enumerate}
\end{defin}

\begin{teo} \label{main.theorem} Consider $\mathbf X$ with the group topology in Theorem \ref{main.forcing}. If $H$ is a large subgroup of $\mathbf X$, then it is countably compact in the subspace topology and has convergent sequences.
\end{teo}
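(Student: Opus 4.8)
The plan is to verify countable compactness of $H$ in the subspace topology by showing that every sequence in $H$ has an accumulation point in $H$, using the classification of sequences into the 11 types (Theorem \ref{prop_subseq}) together with the properties (1)--(3) of the forced homomorphism $\Phi$ guaranteed by Theorem \ref{main.forcing}. First I would reduce to the case of a sequence of the form $(n!\chi_n : n \in \omega)$-perturbed data: given an arbitrary one-to-one sequence $g : \omega \to H$, we may view it (via the fixed embedding of $H$ into $\mathbf X$, and the lift to $\mathbb X$) as $g : \omega \to \mathbb X$ with $[g] \in H^\omega$. Applying Theorem \ref{prop_subseq} with $G = H$ (using $\{[\chi_n] : n \in \omega\} \subseteq H$, which follows from (i) of Definition \ref{large.subgroup} since $\omega \subseteq C_1$), we obtain, after passing to a subsequence $g \circ j$, a decomposition $g \circ j = h + c + \sum_{i \in F}\frac{p_i}{q_i} f \circ j_i$ where $h \in \mathcal H_H = H^\omega \cap \mathcal H$ (or $[h]$ is constant in $H$), $[c] \in H$ is constant, and each $[\frac{1}{q_i} f\circ j_i] \in H^\omega$.

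Next I would handle each summand separately using Proposition \ref{prop_p-limit} (accumulation points respect sums and negations when one picks a common ultrafilter witnessing convergence along a fixed infinite set). The term $\sum_{i\in F}\frac{p_i}{q_i} f\circ j_i$ is a finite sum of scalar multiples of the sequence $(\frac{1}{N}\chi_n)$-type sequences, each of which $\Phi$ sends to a sequence converging to $0$ by item (3) of Theorem \ref{main.forcing} (applied with $N = q_i$, after noting $q_i \mid (j_i(n))!$); hence along any infinite set this part has $0$ as a $p$-limit for every $p$. The constant term $c$ contributes $\Phi([c])$, which lies in $\Phi(H)$. The crux is the term $h$: if $[h]$ is constant, it again contributes a point of $\Phi(H)$. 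Otherwise $h$ is of one of the 11 types with respect to $H$, and here I would invoke items (1) and (2) of Theorem \ref{main.forcing}: if $h$ is of a type among $1$--$10$, then for the index $m = k_n$ there is $\mu \in C_{1,m}$ with $[\chi_\mu]$ an accumulation point of $(\Phi([h(k)]):k\in\omega)$, and crucially $[\chi_\mu] \in \Phi(H)$ because $\mu \in C_1$ and (i) holds; if $h$ is of type $11$ of order $n$, then $n \in D$ and for $m = k_n$ there is $\mu \in C_{n,k_n}$ with $[\frac{1}{n}\chi_{(\mu,0)}]$ an accumulation point, which lies in $\Phi(H)$ by (ii). Picking a free ultrafilter $p$ along a suitable infinite subset of $\omega$ realizing this accumulation point as a genuine $p$-limit (restricting further to the infinite set on which the other summands converge), Proposition \ref{prop_p-limit} assembles a $p$-limit of $(\Phi(g\circ j)(k):k\in\omega)$ lying in $\Phi(H)$, so $\Phi(H) \cong H$ is countably compact.

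The convergent sequence is then immediate: by item (3) of Theorem \ref{main.forcing}, $(\Phi([\frac{n!}{S}\chi_n]):n\in\omega)$ converges to $0$ in $\mathbb T^{\mathfrak c}$ for every positive integer $S$; since $\omega \subseteq C_1 \subseteq H$ we have $\frac{n!}{S}\chi_n \in H$ for all $n$, so this is a non-trivial convergent sequence inside $H$ (non-trivial because $\Phi$ is injective, so the terms are distinct). The main obstacle I anticipate is bookkeeping the reduction to a common infinite index set: Theorem \ref{prop_subseq} already passes to a subsequence via $j$, and then the accumulation-point statements of Theorem \ref{main.forcing} only give accumulation points, not $p$-limits along a prescribed set, so one must first extract an infinite $B \subseteq \omega$ along which $(\Phi([h(j(k))]))_{k\in B}$ converges to the designated $\chi_\mu$ (or $\frac1n\chi_{(\mu,0)}$), and along which the finitely many $\Phi(\frac{p_i}{q_i}f\circ j_i\circ j)$ also converge (to $0$) — then take any $p \supseteq$ the filter generated by $B$. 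Care is also needed to confirm $D \neq \emptyset$ or to handle the degenerate case where $H$ has no sequences of type $11$, and to double-check that the $C_{n,k_n}$ appearing in (ii) are exactly the classes indexing the witnesses in Theorem \ref{main.forcing}(2); these are routine but must be stated.
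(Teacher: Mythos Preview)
Your proposal is correct and follows essentially the same line as the paper's proof: apply Theorem~\ref{prop_subseq} to decompose an arbitrary sequence in $H$ (after lifting to $\mathbb X$) as $h + c + \sum_{i\in F}\frac{p_i}{q_i}f\circ j_i$, observe that the $f\circ j_i$ summands converge to $0$ by item~(3) of Theorem~\ref{main.forcing}, and then use items~(1) and~(2) of Theorem~\ref{main.forcing} together with conditions (i) and (ii) in Definition~\ref{large.subgroup} to place the accumulation point of $[h]$ inside $H$.

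Two cosmetic slips to clean up. First, for types $1$--$10$ there is no order $n$ in play, so writing ``for the index $m=k_n$'' is meaningless there; any $m>1$ works, since $C_{1,m}\subseteq C_1$ and condition~(i) applies (which you in fact invoke correctly one line later). Second, in your bookkeeping paragraph you write $h(j(k))$ where you mean $h(k)$: the decomposition is $\tilde g\circ j(k)=h(k)+c+\sum_i\frac{p_i}{q_i}f(j_i(k))$, so Theorem~\ref{main.forcing} is applied to $h$ itself, not to $h\circ j$. Finally, your worry about $D=\emptyset$ is not a real obstacle: if $h$ is of type~$11$ and order $n$, then $H$ contains an infinite independent family of elements of order $n$, and the large-subgroup hypothesis on the groups $dH[n']$ then forces $\mathbb Z_n^{(\mathfrak c)}\subseteq H$, so $n\in D$; if no type~$11$ sequence ever arises, condition~(ii) is simply never invoked.
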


\begin{proof}It follows from Theorem \ref{main.forcing} that if $S$ is a positive integer, then the sequence $\left ( \frac{1}{S}   n!   \chi_{n} : n \in \omega \right)$ converges to the neutral element of $\mathbf X$. Since both the elements of the sequence is eventually in $H$ and the limit is in $H$, it follows that $H$ has non-trivial convergent sequences.

Let $g: \omega \to H$. Take any $\tilde g:\omega\to\mathbb X$ such that $[\tilde g]= g.$ It follows from Proposition \ref{prop_subseq} that there exist $h:\omega\rightarrow \mathbb X$ such that $h\in \mathcal{H}_H$ or $[h]$ is a constant in $H$, $c\in \mathbb X$ with $[c] \in H$, $F \in [\omega]^{< \omega}$, $p_i, q_i \in \mathbb{Z}$ with $q_i \neq 0$ for every $i \in F$, $(j_{i}:i\in F)$ increasing enumerations of subsets of $\omega$  and $j: \omega \to \omega$ strictly increasing such that

\[\tilde g \circ j = h + c + \sum_{i \in F} \frac{p_i}{q_i}   f\circ j_{i}\]

with $q_i\leq j_i(n)$ for each $n \in \omega$ and $i \in F$, where $f : \omega \to \mathbb X$ is given by $f(n) = n!\chi_n$ for every $n \in \omega$.

In the case where $[h]$ is constant, say constantly $v\in\mathbf{X}$, we have that $g\circ j=[\tilde g]\circ j=[\tilde g\circ j]=v+[c]+\sum_{i\in F}[\frac{p_i}{q_i}f\circ j_i])$ converges to $v+[c]$.

In the case $h\in\mathcal H _H$ and $h$ is type 11 of order $n$, then $H$ contain infinitely many copies of $\mathbb Z_n$. Thus, by hypothesis, $n \in D$. Since $n\in D$, it follows from $\mathcal H_H \subseteq \mathcal H$ that $(h(k):\, k \in \omega)$ has an accumulation point $[\frac{1}{n}\chi_{\mu,0}]$ with $\mu \in C_{n,k_n}$. Hence, an accumulation point of $h$ in $H$. Thus the sequence $(g\circ j(k):\, k \in \omega)$
has an accumulation point in $[\frac{1}{n}\chi_{\mu,0}] +[c]$ in $H$.

In the case $h\in\mathcal H _G$ and $h$ is type 1 to 10, it follows from $\mathcal H_G \subseteq \mathcal H$ that $(h(k):\, k \in \omega)$ has an accumulation point  $[\chi_{\mu}]$ with $\mu \in C_{1}$. Hence, an accumulation point of $h$ in $H$. Thus the sequence $(g\circ j(k):\, k \in \omega)$ has accumulation point $[\chi_{\mu}]+[c]$ in $H$.
\end{proof}

\section{The classification of Abelian groups of cardinality \texorpdfstring{$2^\mathfrak c$}{2c}.}

\subsection{Immersions} 

We change slightly the statement and the notation of Proposition 6.1. in \cite{bellini&boero&castro&rodrigues&tomita} to facilitate the application, but it is implicit in the proof in
\cite{bellini&boero&castro&rodrigues&tomita}. 

We define $\mathbf {A}=(\mathbb Q /\mathbb Z)^{(P_0)} \oplus \mathbb Q ^{(P_1)}\oplus \mathbf U$.

\begin{defin} We say that $\mathbf W$ is a nice subgroup of $\mathbb W_\mathfrak c$ if there exists a family of positive integers $(n_\xi)_{\xi \in P_0}$ such that 

$\mathbf W=(\mathbb{Q} /\mathbb{Z})^{(\bigcup_{\xi \in P_0 } \{\xi\}\times n_\xi)} \oplus\mathbb{Q}^{(P_1)}$. For this $\mathbf W$ we denote $\vec{P_0}= (\bigcup_{\xi \in P_0 } \{\xi\}\times n_\xi )$, so $\mathbf W=(\mathbb{Q} /\mathbb{Z})^{(\vec{P_0})} \oplus\mathbb{Q}^{(P_1)}$.
\end{defin}
\begin{prop} \label{megazord}
Let  $H$ be an Abelian group  such that $|H| = r(H) = \mathfrak{c}$ with $H$ a subgroup of $\mathbf{A}_\mathfrak c$.

Let $D$ be the set of all integers $n > 1$ such that $H$ contains an isomorphic copy of the group $\mathbb{Z}_{n}^{(\mathfrak{c})}$.

Then there exist $\mathbf{W}$ a nice subgroup of $\mathbb W_\mathfrak c$, $K_1 \in [P_1]^{\mathfrak c}$ with $\omega \subseteq K_1$, a family $(K_n: n \in D)$ of pairwise disjoint elements of $[P_0]^{\mathfrak c}$, a family $(z_\xi: \xi \in \bigcup_{n \in D} K_n)$ and a group monomorphism $\phi: \mathbf{ A}_\mathfrak c \to \mathbf{W}$ such that:

\begin{enumerate}[label=\alph*)]
\item $\{ \chi_{\xi} \in \mathbf{W}_{P_1} : \xi \in K_1\} \subseteq \phi[H]$,
\item $\{z_{\xi} \in \mathbf{W}_{P_0} : \xi \in \cup_{n \in D} K_n\} \subseteq \phi[H]$,
\item $\ordem(z_{\xi}) = n,\ \forall \xi \in K_n, n \in D$ and
\item $\supp z_{\xi} \subseteq \{\xi\} \times \omega \ \forall \xi \in \cup_{n \in D} K_n$.
\end{enumerate}

We say that $\phi$ is a nice immersion for $H$. \qed
\end{prop}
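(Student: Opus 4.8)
The plan is to reduce the statement to a structural analysis of the subgroup $H$ of $\mathbf A_\mathfrak c$, building the target nice subgroup $\mathbf W$ and the monomorphism $\phi$ piece by piece along the decomposition $\mathbf A = (\mathbb Q/\mathbb Z)^{(P_0)}\oplus\mathbb Q^{(P_1)}\oplus\mathbf U$. First I would fix, for each $n\in D$, an independent family of cardinality $\mathfrak c$ inside $H$ consisting of elements of order exactly $n$ (this exists by the definition of $D$ together with the hypothesis that $dH[n]$ is finite or of size $\geq\mathfrak c$ for $d\mid n$); pairwise disjointify the $P_0$-supports of these families across distinct $n$ by a standard $\Delta$-system / injectivity-of-support argument, using $|P_0|=\mathfrak c$ to relabel coordinates freely. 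This yields the sets $K_n\in[P_0]^\mathfrak c$ and elements $z_\xi\in H$ with $\ordem(z_\xi)=n$ for $\xi\in K_n$. Then I would use divisibility (the injectivity of $\mathbb Q/\mathbb Z$ as an injective object, i.e.\ the extension proposition quoted earlier) to produce, coordinatewise, the family of integers $(n_\xi)_{\xi\in P_0}$ large enough that each relevant $z_\xi$ can be placed with support inside $\{\xi\}\times n_\xi\subseteq\{\xi\}\times\omega$; this defines $\vec{P_0}$ and hence $\mathbf W$, and gives condition d).

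Next I would handle the free part. Since $r(H)=|H|=\mathfrak c$, a maximal independent set of infinite-order elements of $H$ has size $\mathfrak c$; projecting $H$ into $\mathbb Q^{(P_1)}\oplus\mathbb Q^{(R_1)}$ (the non-torsion summand of $\mathbf A$) and again relabeling $P_1$-coordinates, I would arrange for $\mathfrak c$-many coordinates $\xi\in P_1$ where $H$ contains $\chi_\xi$ outright, reserving $\omega\subseteq K_1$ as required; the remaining free generators of $H$ (including those living originally on $R_1$) get routed into fresh $P_1$ coordinates so that the whole of $H$ embeds. The map $\phi$ is assembled as the direct sum of: the identity-like relabeling on the torsion-and-free coordinates already in $P_0\cup P_1$, the chosen embeddings of the $z_\xi$'s into $(\mathbb Q/\mathbb Z)^{(\vec{P_0})}$, and an embedding of the leftover part of $\mathbf U=(\mathbb Q/\mathbb Z)^{(R_0)}\oplus\mathbb Q^{(R_1)}$ into spare coordinates of $(\mathbb Q/\mathbb Z)^{(\vec{P_0})}\oplus\mathbb Q^{(P_1)}$ — possible because $\mathbf W$ has $\mathfrak c$-many coordinates of each kind and $|\mathbf A_\mathfrak c|=\mathfrak c$. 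Injectivity of $\phi$ on all of $\mathbf A_\mathfrak c$ follows by checking it is injective on each summand and the images have disjoint supports; conditions a)--c) are then read off directly from the construction.

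The main obstacle, I expect, is the bookkeeping that simultaneously (i) keeps the supports of the $z_\xi$ pairwise disjoint and confined to single $P_0$-columns, (ii) guarantees $\phi$ is a monomorphism on the \emph{entire} ambient group $\mathbf A_\mathfrak c$ and not merely on $H$, and (iii) respects the prescribed initial segments $\omega\subseteq K_1$ and $\omega+1\subseteq P_1$. The cleanest way around this is to invoke the structure theory of the relevant groups: decompose $H$ (or rather a divisible hull of $H$ inside $\mathbf A_\mathfrak c$) via Theorem \ref{teo_classificacao_grupos_divisiveis} into copies of $\mathbb Q$ and quasicyclic groups, note that the socle $H[n]$ controls how many independent order-$n$ elements are available, and then build $\phi$ as an amalgam of embeddings of these canonical summands into disjointly-supported blocks of $\mathbf W$, the blocks being chosen with room to spare since every cardinal in sight is $\leq\mathfrak c=|\mathbf W\text{'s coordinate set}|$. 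Since the full argument is exactly the one already carried out for Proposition 6.1 of \cite{bellini&boero&castro&rodrigues&tomita} with only cosmetic changes to the indexing, I would present it at this level of detail and refer to that paper for the verification of the routine support computations.
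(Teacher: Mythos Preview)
Your bottom line agrees with the paper: the proposition is simply Proposition~6.1 of \cite{bellini&boero&castro&rodrigues&tomita} transported through the natural identification of $\mathbb W_\mathfrak c$ with the $\mathbb W$ of that paper, followed by an extension of the resulting monomorphism to all of $\mathbf A_\mathfrak c$ using divisibility of $\mathbf W$. However, your sketch of the argument contains two misreadings of the statement that you should correct. First, $\mathbf A_\mathfrak c$ is the restriction of $\mathbf A$ to coordinates in $\mathfrak c = P_0\cup P_1$, so it has \emph{no} $\mathbf U$ (or $R_0$, $R_1$) component; your step ``embed the leftover part of $\mathbf U$ into spare coordinates of $\mathbf W$'' is both unnecessary and impossible, since $|\mathbf U| = 2^\mathfrak c > \mathfrak c = |\mathbf W|$. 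The passage to $\mathbf U$ happens only later, in Proposition~\ref{megazord.returns}, where $\phi$ is extended by the identity on $\mathbf U$ with the target enlarged to $\mathbf W_\mathfrak c\oplus\mathbf U$. Second, the condition ``$dH[n]$ is finite or of size $\geq\mathfrak c$'' is not a hypothesis here; the existence of $\mathfrak c$-many independent order-$n$ elements for $n\in D$ is just the definition of $D$.
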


This proposition follows from Proposition 6.1. of \cite{bellini&boero&castro&rodrigues&tomita}: $\mathbb W_{\mathfrak c}$ is naturally isomorphic to the group $\mathbb W$ that appears in \cite{bellini&boero&castro&rodrigues&tomita}, and this natural isomorphism preserves the nice subgroups defined in that article. Finally, $\mathbf W$ is divisible, so we can extend the isomorphism that Proposition 6.1. gives us to the whole group $\mathbf A_{\mathfrak c}$.

\begin{prop} \label{megazord.returns}
Let  $H$ be an Abelian group  such that $2^\mathfrak c \geq |H| \geq r(H) \geq \mathfrak{h}$.

Let $D$ be the set of all integers $n > 1$ such that $H$ contains an isomorphic copy of the group $\mathbb{Z}_{n}^{(\mathfrak{c})}$.

Then there exists a family $(k_n:\, n \in D)$ of positive integers and a group monomorphism $\varphi: H \to \mathbf{X}$ such that:

\begin{enumerate}[label=\roman*)]
\item $\{ \chi_{\mu} \in \mathbf{X}_{P_1 \cup R_1} : \mu \in  C_1\} \subseteq \varphi[H]$, and
\item $\{[\frac{1}{n}\chi_{(\mu,0)}] \in \mathbf{X}_{P_0\cup R_0} : n \in D,\,\mu \in \cup_{n \in D} C_{n,k_n}\} \subseteq \varphi[H]$.
\end{enumerate}

Thus, $G$ is isomorphic to a large subgroup of $\mathbf X$.
\end{prop}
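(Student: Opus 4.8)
The plan is to reduce Proposition~\ref{megazord.returns} to Proposition~\ref{megazord} by first cutting $H$ down to a subgroup of rank exactly $\mathfrak c$ and then bootstrapping back up to the full group using divisibility of the ambient target. The reason such a reduction is available is that $\mathbf X$ contains, as direct summands, plenty of copies of $\mathbb Q$ and $\mathbb Q/\mathbb Z$ indexed by the sets $R_0, R_1$ of size $2^{\mathfrak c}$; these are exactly the ``room'' we reserved to absorb the part of $H$ beyond cardinality $\mathfrak c$.

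First I would pick a subgroup $H_0 \le H$ with $|H_0| = r(H_0) = \mathfrak c$ and such that $H_0$ still witnesses $D$; concretely, choose a maximal independent set of infinite-order elements, take a subset of it of size $\mathfrak c$, throw in for each $n \in D$ a copy of $\mathbb Z_n^{(\mathfrak c)}$ sitting inside $H$, and let $H_0$ be the subgroup generated by all of this. Then $r(H_0)=\mathfrak c$, $|H_0|=\mathfrak c$, and the set $D_0$ of integers $n>1$ with $\mathbb Z_n^{(\mathfrak c)}\hookrightarrow H_0$ equals $D$. Next, embed $H_0$ in a divisible hull and use the classification of divisible groups (Theorem~\ref{teo_classificacao_grupos_divisiveis}) together with the constraint on $dH[n]$ to arrange that $H_0$ sits inside a copy of $\mathbf A_{\mathfrak c} = (\mathbb Q/\mathbb Z)^{(P_0)}\oplus\mathbb Q^{(P_1)}\oplus\mathbf U$ — this is the routine bookkeeping that matches the socle and free rank data of $H_0$ to the indexing sets $P_0, P_1$. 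Now apply Proposition~\ref{megazord} to $H_0$: it yields a nice subgroup $\mathbf W$ of $\mathbb W_{\mathfrak c}$, sets $K_1\in[P_1]^{\mathfrak c}$ with $\omega\subseteq K_1$, pairwise disjoint $K_n\in[P_0]^{\mathfrak c}$ for $n\in D$, elements $z_\xi$, and a monomorphism $\phi\colon \mathbf A_{\mathfrak c}\to\mathbf W$ satisfying (a)--(d). One then relabels: since each $\mathbf W$-coordinate over $\xi\in P_0$ is a $\mathbb Q/\mathbb Z$ and $\supp z_\xi\subseteq\{\xi\}\times\omega$, a coordinate permutation carries $\phi[H_0]$ into $\mathbf X$ so that the $K_1$ witnesses land inside $C_1$ and, for $n\in D$, the $K_n$ witnesses land inside some $C_{n,k_n}$ with $z_\xi$ sent to a conjugate of $\tfrac1n\chi_{(\mu,0)}$; this gives the desired data $(k_n:n\in D)$ and an embedding $\varphi_0\colon H_0\to\mathbf X$ satisfying (i) and (ii).

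Second, I would extend $\varphi_0$ from $H_0$ to all of $H$. Here is where the surplus components $R_0, R_1$ enter: the quotient $H/H_0$ has free rank $\le 2^{\mathfrak c}$ and torsion part of size $\le 2^{\mathfrak c}$, and $\mathbb U = \mathbb Q^{(R_0)}\oplus\mathbb Q^{(R_1)}$, $\mathbf U=(\mathbb Q/\mathbb Z)^{(R_0)}\oplus\mathbb Q^{(R_1)}$ are direct summands of $\mathbf X$ of size $2^{\mathfrak c}$ that are divisible (for the $\mathbb Q$-parts and $\mathbb Q/\mathbb Z$-parts) and large enough to receive anything. Since $\mathbf X$ is divisible, the extension principle for homomorphisms into divisible groups (the first Proposition after Theorem~\ref{teo_imersao_num_grupo_divisivel}) produces a homomorphism $\varphi\colon H\to\mathbf X$ extending $\varphi_0$; the only real work is to choose this extension \emph{injective}. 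For that I would extend stepwise along a well-order of $H\setminus H_0$ of length $\le 2^{\mathfrak c}$, at each step picking the image of a new generator inside the untouched part of $\mathbf U\oplus\mathbb U$ (which is still divisible and of size $2^{\mathfrak c}$ after fewer than $2^{\mathfrak c}$ steps) so as to avoid the at-most-$2^{\mathfrak c}$ many ``forbidden'' values that would collapse the kernel — a standard transfinite back-and-forth-free injection argument, using that a proper divisible group has $2^{\mathfrak c}$ elements outside any subgroup of smaller size. Since the new coordinates all lie in $R_0\cup R_1$, they do not disturb conditions (i) and (ii), which were already secured by $\varphi_0$. Finally, Definition~\ref{large.subgroup} is satisfied by $\varphi[H]$: the cardinality/rank and the $dG[n]$ conditions are inherited from $H$ (they are isomorphism invariants), and (i), (ii) are exactly what we arranged, so $\varphi[H]$ is a large subgroup of $\mathbf X$ and $H\cong\varphi[H]$.

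The main obstacle is the injective-extension step: merely extending a homomorphism into a divisible group is automatic, but keeping it one-to-one requires the transfinite argument above, and one must be careful that the ``budget'' of available fresh coordinates in $R_0\cup R_1$ never runs out — i.e. that after $<2^{\mathfrak c}$ steps the unused portion of $\mathbf U\oplus\mathbb U$ is still a divisible group of size $2^{\mathfrak c}$, and that the set of values to avoid at each step has size $<2^{\mathfrak c}$ (it does, being governed by the size of the already-constructed image). A secondary, more bookkeeping-heavy point is verifying that $H_0$ can indeed be placed inside $\mathbf A_{\mathfrak c}$ with the socle data lining up with $P_0$ versus $P_1$ correctly; this uses the hypothesis that each $dH[n]$ is finite or of size $\ge\mathfrak c$ in an essential way, exactly as in the ZFC direction of Dikranjan--Shakhmatov's classification.
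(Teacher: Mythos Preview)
Your plan is correct, but it takes a harder road than the paper's proof. The crucial observation you miss is that Proposition~\ref{megazord} hands you a monomorphism $\phi$ defined on the \emph{entire} ambient group $\mathbf A_{\mathfrak c}=(\mathbb Q/\mathbb Z)^{(P_0)}\oplus\mathbb Q^{(P_1)}$, not just on the size-$\mathfrak c$ subgroup $\tilde H$. The paper exploits this: it first embeds all of $H$ (not just $\tilde H$) into $\mathbb A=\mathbf A_{\mathfrak c}\oplus\mathbf U$, arranges $\tilde H\subseteq\mathbf A_{\mathfrak c}$ by a coordinate permutation, applies Proposition~\ref{megazord}, and then simply extends $\phi$ to $\mathbb A$ by the identity on the $\mathbf U$ summand. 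The resulting $\phi$ is injective on all of $\mathbb A$, hence on $H$, and your ``main obstacle'' --- the transfinite injective-extension step --- evaporates. Composing with the relabeling $\eta$ finishes. Your fresh-coordinate transfinite argument does work (one must use torsion elements $[\tfrac{1}{n}\chi_\mu]$ with $\mu\in R_0$, not elements of $\mathbb U$, which is not a summand of $\mathbf X$), but it is unnecessary labor.

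A second point where the paper is more explicit than your sketch is the relabeling from $\mathbf W$ into $\mathbf X$: before defining $\eta$, the paper shrinks each $K_n$ so that $|\supp z_\xi|$ is a constant $k_n$ (pigeonhole), then permutes within $\{\xi\}\times\omega$ so that $\supp z_\xi=\{\xi\}\times k_n$ exactly; only then does the bijection $\sigma_n\colon K_n\to C_{n,k_n}$ allow one to define an isomorphism $\eta\colon\mathbf W_{\{\xi\}\times k_n}\to\mathbf X_{\{\sigma_n(\xi)\}\times k_n}$ sending $z_\xi\mapsto[\tfrac1n\chi_{(\sigma_n(\xi),0)}]$. Your phrase ``a coordinate permutation'' hides this normalization, which is where the integers $k_n$ actually come from. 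Also, a small slip: $\mathbf A_{\mathfrak c}$ does \emph{not} contain $\mathbf U$ (the subscript restricts supports to $\mathfrak c$), and the constraint on $dH[n]$ is not used to place $H_0$ inside $\mathbf A_{\mathfrak c}$ --- cardinality alone suffices for that.
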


\begin{proof} By theorems \ref{teo_classificacao_grupos_divisiveis} and \ref{teo_imersao_num_grupo_divisivel}, we may consider $H$ is a subgroup of $\mathbb A$. Then we can fix a subgroup $\tilde H$ of $H$ of cardinality $\mathfrak c$ such that $r(\tilde H)=\mathfrak c$ and for every $n \in D$, there exists a copy of $(\mathbb {\mathbb Z}_n)^\mathfrak c$ in $\tilde H$. By a trivial permutation  of coordinates we can assume that $\tilde H$ is a subgroup $\mathbb A_{\mathfrak c}$.
 Applying Proposition \ref{megazord}, there exist $\mathbf{W}$ a nice subgroup of $\mathbb W_\mathfrak c$, $K_1 \in [P_1]^{\mathfrak c}$ with $\omega \subseteq K_1$, a family $(K_n: n \in D)$ of pairwise disjoint elements of $[P_0]^{\mathfrak c}$, a family $(z_\xi: \xi \in \bigcup_{n \in D} K_n)$ and a group monomorphism $\phi: \mathbb A_\mathfrak c \to \mathbf{W}_\mathfrak c= \mathbf{W}_{P_0}\oplus \mathbf{W}_{P_1}$ such that:

\begin{enumerate}[label=\alph*)]

\item $\{(\chi_{\xi} \in \mathbf{W}_{P_1} : \xi \in K_1\} \subseteq \phi[\tilde H]$,
\item $\{z_{\xi}\in \mathbf{W}_{P_0} : \xi \in \cup_{n \in D} K_n\} \subseteq \phi[\tilde H]$,
\item $\ordem(z_{\xi}) = n, \ \forall \xi \in K_n, n \in D$ and
\item $\supp z_{\xi} \subseteq \{\xi\} \times \omega, \ \forall \xi \in \cup_{n \in D} K_n$.

We can shrink $K_n$ if necessary to find $k_n$ positive integer such that \\

\item $|\supp z_\xi|= k_n$, for each $n \in D$ and $\xi \in K_n$ .\\

By making some permutation within each $ \{\xi\} \times k_n$
we can further assume that \\

\item $\supp z_\xi =\{\xi\} \times k_n$ for each $n \in D$ and $\xi \in K_n$.

\end{enumerate}

Define $\mathbf W = \mathbf{W}_\mathfrak c \oplus \mathbf{U} $.

We can assume that $\phi:\, \mathbb A_\mathfrak c \to \mathbf W$  and extend it to $\phi:\, \mathbb A= \mathbb A_\mathfrak c \oplus \mathbf{U} \to \mathbf{W} =\mathbf{W}_\mathfrak c \oplus \mathbf{U}$, using the identity on $\mathbf{U}$.

Fix  $\sigma_n$ a bijection between $K_n$  and $C_{n,k_n}$ for each $n\in D$ and $\sigma_1$  a bijection between $K_1$ and $C_1$ with $\sigma_1(k)=k$ for every $k\in \omega$.\\

Define $\eta:\, \mathbf W \longrightarrow \mathbf X$ an injective homomorphism such that

- $\eta:\,\mathbf{W}_{\{\xi\}\times k_n} \to \mathbf{X}_{\{\sigma_n(\xi)\}\times k_n}$ is an isomorphism with
$\eta(z_\xi)=[\frac{1}{n}\chi_{(\sigma_n(\xi),0)}]$ for each $n \in D$ and $\xi \in K_n$ (this is possible by condition $f)$), 

- $\eta([\chi_{\xi}])= [\chi_{\sigma_1(\xi)}]$ for each $\xi \in K_1$,

- $\eta$ restricted to  $\mathbf{U}$ is the identity.\\

Now, let $\varphi= \eta \circ \phi|_G$. The homomorphism is an embedding, since both $\eta$ an $\phi$ are injective homomorphisms.

Applying $\eta$ in $a)$ it follows that 
 $\{\eta( \chi_{\xi}) \in \eta[\mathbf{W}_\mathfrak c] \subseteq \eta[\mathbf W]: \xi \in K_1\} \subseteq \varphi[\tilde H]$.
 
 Therefore, $\{\chi_\mu \in \mathbf X:\, \mu \in C_1\} \subseteq \varphi[\tilde H] \subseteq \varphi[H]$ and $i)$ holds.
 
 Likewise, condition $ii)$ holds.
\end{proof}

\subsection{The classification}

\begin{teo} \label{theor.with.immersion} Consider $\mathbf X$ with the topology from Theorem \ref{main.forcing}.  Let $H$ be a group such that $2^\mathfrak c \geq |H| \ge r(H) = \mathfrak{c}$ and for all $d, n \in \mathbb{N}$ with $d \mid n$, the group $dG[n]$ is either finite or has cardinality at least $\mathfrak{c}$. Then $H$ admits a countably compact group topology with a non-trivial convergent sequence.
\end{teo}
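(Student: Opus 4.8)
The plan is to deduce Theorem \ref{theor.with.immersion} directly from Theorem \ref{main.theorem} together with the embedding result Proposition \ref{megazord.returns}. Theorem \ref{main.theorem} tells us that every large subgroup of $\mathbf X$, equipped with the subspace topology coming from the forced topology on $\mathbf X$ of Theorem \ref{main.forcing}, is countably compact and carries a non-trivial convergent sequence. So it suffices to show that the hypotheses on $H$ guarantee that $H$ is isomorphic (as an abstract group) to a large subgroup of $\mathbf X$; transporting the topology along the isomorphism then finishes the proof.

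First I would check that the hypotheses of Theorem \ref{theor.with.immersion} match those of Proposition \ref{megazord.returns}. We are given $2^{\mathfrak c}\geq |H|\geq r(H)=\mathfrak c$, and since $\mathfrak h\leq\mathfrak c$ (in fact $\mathfrak h\leq\mathfrak c$ always, and here $r(H)=\mathfrak c$), the condition $r(H)\geq\mathfrak h$ in Proposition \ref{megazord.returns} is satisfied. Letting $D$ be the set of integers $n>1$ such that $H$ contains a copy of $\mathbb Z_n^{(\mathfrak c)}$, Proposition \ref{megazord.returns} produces a family $(k_n:n\in D)$ of positive integers and a group monomorphism $\varphi\colon H\to\mathbf X$ whose image satisfies conditions (i) and (ii) of Definition \ref{large.subgroup}. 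The remaining numerical conditions in the definition of a large subgroup, namely $2^{\mathfrak c}\geq|G|\geq r(G)\geq\mathfrak c$ and the condition that $dG[n]$ is finite or of size $\geq\mathfrak c$ for all $d\mid n$, are exactly (the image under $\varphi$ of) the hypotheses on $H$, and they are preserved by the group isomorphism $\varphi\colon H\to\varphi[H]$ since all of $|\cdot|$, free rank, $G[n]$ and $dG[n]$ are algebraic invariants. Hence $\varphi[H]$ is a large subgroup of $\mathbf X$, and one must also note that the set $D$ computed intrinsically for $H$ agrees with the set of $n>1$ for which $\varphi[H]$ contains a copy of $\mathbb Z_n^{(\mathfrak c)}$ — again because $\varphi$ is an isomorphism onto its image.

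Then I would invoke Theorem \ref{main.theorem} applied to $\varphi[H]$: this subgroup, with the subspace topology inherited from $(\mathbf X,\tau)$ where $\tau$ is the topology of Theorem \ref{main.forcing}, is countably compact and has a non-trivial convergent sequence. Pulling this topology back through the isomorphism $\varphi$ gives a Hausdorff group topology on $H$ (Hausdorffness because $\mathbb T^{\mathfrak c}$ is Hausdorff and $\Phi$ is injective, so $\tau$ is Hausdorff and subspace/homeomorphic-copy topologies stay Hausdorff) which is countably compact and admits a non-trivial convergent sequence. This is precisely the conclusion of Theorem \ref{theor.with.immersion}.

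I do not expect a serious obstacle here: the theorem is essentially a bookkeeping corollary combining the structural embedding (Proposition \ref{megazord.returns}) with the topological statement (Theorem \ref{main.theorem}). The only point that requires a line of care is the verification that "large subgroup" is an isomorphism-invariant notion in the relevant sense — that is, that the combinatorial data $D$ and the witnessing conditions (i)–(ii) and the cardinal conditions all transfer faithfully along $\varphi$ — and that the forced topology is genuinely Hausdorff so that the transported topology is a legitimate Hausdorff group topology. Both are immediate once spelled out.
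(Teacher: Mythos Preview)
Your proposal is correct and follows exactly the same route as the paper: apply Proposition \ref{megazord.returns} to realize $H$ as (an isomorphic copy of) a large subgroup of $\mathbf X$, and then invoke Theorem \ref{main.theorem}. Your additional remarks verifying that the hypotheses match and that the relevant properties transfer along the isomorphism are sound elaborations of what the paper leaves implicit.
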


\begin{proof} By Proposition \ref{megazord.returns}, the group $H$ is isomorphic to a large subgroup of $\mathbf X$ therefore by Theorem \ref{main.theorem}, $H$ admits a countably compact group topology with a convergent sequence.
\end{proof}

We can now (consistently) answer Dikranjan and Shakhmatov's question for Abelian groups of cardinality $\leq 2^{\mathfrak c}$.

\begin{cor}\label{main.corol} Consider the forcing model in Theorem \ref{main.forcing}

Let $H$ be a non-torsion Abelian group of size at most $2^\mathfrak c$. Then the following are equivalent

1) $(\star)$ $2^\mathfrak c \geq |H| \ge r(H) = \mathfrak{c}$ and for all $d, n \in \mathbb{N}$ with $d \mid n$, the group $dH[n]$ is either finite or has cardinality at least $\mathfrak{c}$;

2) $H$ admits a countably compact Hausdorff group topology 

3) $H$ admits a countably compact Hausdorff group topology with non-trivial convergent sequences. 
\end{cor}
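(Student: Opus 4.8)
\textbf{Proof proposal for Corollary \ref{main.corol}.}

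The plan is to prove the cycle of implications $1)\Rightarrow 3)\Rightarrow 2)\Rightarrow 1)$, where the first step is the substantial new content of this paper and the other two are either immediate or quoted from the literature. First I would derive $3)$ from $1)$ by a direct appeal to Theorem \ref{theor.with.immersion}: condition $(\star)$ is exactly the hypothesis on $|H|$, $r(H)$ and the cardinalities of the subgroups $dH[n]$ that Theorem \ref{theor.with.immersion} requires, so that theorem, working in the forcing model of Theorem \ref{main.forcing}, produces a countably compact Hausdorff group topology on $H$ with a non-trivial convergent sequence. The implication $3)\Rightarrow 2)$ is trivial, since a topology witnessing $3)$ is in particular a countably compact Hausdorff group topology.

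The remaining implication $2)\Rightarrow 1)$ is the ZFC half of the Dikranjan--Shakhmatov classification of Abelian groups of size at most $2^{\mathfrak c}$ admitting a countably compact group topology, recalled in the introduction: if $G$ admits a countably compact group topology then $r(G)\geq \mathfrak c$ and, for all $d,n\in\mathbb N$ with $d\mid n$, the group $dG[n]$ is finite or of cardinality $\geq\mathfrak c$. I would invoke this directly from \cite{dikranjan&shakhmatov2}, noting that its proof of that direction is in ZFC, hence holds in the forcing model. One small point to spell out: $(\star)$ demands $r(H)=\mathfrak c$ exactly (not merely $\geq\mathfrak c$), so I would remark that $H$ non-torsion of size $\leq 2^{\mathfrak c}$ with $r(H)\geq\mathfrak c$ forces $r(H)=\mathfrak c$ unless $r(H)$ could be strictly between $\mathfrak c$ and $2^{\mathfrak c}$; here one uses that Theorem \ref{theor.with.immersion} and Proposition \ref{megazord.returns} are stated for $r(H)=\mathfrak c$, and that a non-torsion group of free rank $>\mathfrak c$ is excluded from $3)$ as well, so the equivalence is genuinely an equivalence of the three displayed statements as written. (If one wants the sharper statement that $r(H)$ cannot lie strictly between $\mathfrak c$ and $2^{\mathfrak c}$ for a countably compact group, that too is part of the Dikranjan--Shakhmatov analysis and can be cited.)

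The main obstacle is not in this corollary at all — it is entirely bookkeeping, assembling Theorem \ref{theor.with.immersion} with the cited ZFC results — but the point that most deserves care in writing is making sure the hypotheses line up verbatim: the ``$dG[n]$ finite or $\geq\mathfrak c$'' clause must be quoted identically in $(\star)$, in Theorem \ref{theor.with.immersion}, and in the statement drawn from \cite{dikranjan&shakhmatov2}, and the free-rank condition must be reconciled between the ``$\geq\mathfrak c$'' appearing in the classification of \cite{dikranjan&shakhmatov2} and the ``$=\mathfrak c$'' appearing in $(\star)$ and in Proposition \ref{megazord.returns}. Once that alignment is checked, the proof is three lines. \qedhere
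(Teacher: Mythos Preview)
Your proposal is correct and follows exactly the same cycle $1)\Rightarrow 3)\Rightarrow 2)\Rightarrow 1)$ as the paper: $1)\Rightarrow 3)$ by Theorem~\ref{theor.with.immersion}, $3)\Rightarrow 2)$ trivially, and $2)\Rightarrow 1)$ by the ZFC direction of the Dikranjan--Shakhmatov classification. The paper's proof is literally three sentences and does not pause over the $r(H)=\mathfrak c$ versus $r(H)\geq\mathfrak c$ issue you raise; your instinct to flag it is reasonable, but your attempted resolution (``a non-torsion group of free rank $>\mathfrak c$ is excluded from $3)$ as well'') is not quite right, since $3)$ by itself imposes no free-rank restriction --- if you want to address this point cleanly, just cite the Dikranjan--Shakhmatov analysis directly for the exact value, as you suggest in your parenthetical.
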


\begin{proof}

If $H$ admits a countably compact group topology then condition $(\star)$ is satisfied in ZFC. Then $2)$ implies $1)$.

By Theorem \ref{theor.with.immersion}, if $H$ satisfies 
$(\star)$ then $3)$ holds.

Finally $3)$ implies $2)$ is obvious.
\end{proof}

\section{Some comments and questions}

 So far, the forcings that have been used in constructions of countably compact groups with some particular properties were the Cohen model, the Random model \cite{szeptycki&tomita} for groups of cardinality $\mathfrak c$ and a variation of forcing to construct Kurepa trees \cite{koszmider&tomita&watson}, \cite{tomita4}, \cite{tomita5}, \cite{castro-pereira&tomita} and \cite{dikranjan&shakhmatov2} for groups of cardinality at most $2^\mathfrak c$.

 The use of selective ultrafilters for countably compact groups with some property started for groups of cardinality $\mathfrak c$ (\cite{tomita&watson},\cite{garcia-ferreira&tomita&watson}), then for groups of cardinality $2^\mathfrak c$
 (\cite{madariaga-garcia&tomita}, \cite{tomita3}). To obtain larger examples, we have used $p$ compactness and the existence of basis in the ultrapower of a direct sum of torsion groups of bounded order \cite{castro-pereira&tomita2010} or the direct sum of $\mathbb Q$'s \cite{bellini&rodrigues&tomitaRationalStack}.
 
 Apparently, selective ultrafilters are not enough to produce a classification of countably compact groups of cardinality $2^\mathfrak c$. Thus, it seems that the best chance to classify the large Abelian groups that admit a countably compact group topologies is through some other type of forcing.
 
 \begin{prob} What other types of forcing can generate countably compact groups with some property of interest?
 \end{prob}
 
 Further questions in ZFC that are still open after the breakthrough in  M. Hrusak, U. A. Ramos-Garcia, J. van Mill and S. Shelah \cite{michaelnew} are:

 \begin{prob} Is there a countably compact group topology on the free Abelian group  of cardinality $\mathfrak c$ in ZFC (with/without non-trivial convergent sequEnces)? 
 \end{prob}
 
 Tomita \cite{tomita2019} showed in ZFC that if there exists a non-torsion countably compact Abelian group without non-trivial convergent sequences then there exists a countably compact free Abelian group without non-trivial convergent sequences.

 As it was the case for the classification obtained by Dikranjan and Tkacheko \cite{dikranjan&tkachenko}, an answer to the question above is the first step for the following:
 
  \begin{prob} Classify in ZFC the Abelian groups of cardinality $\mathfrak c$ that admit a countably compact group topology. 
 \end{prob}
 
 The existence of a countably compact free Abelian group of cardinality $\mathfrak c$ without non-trivial convergent sequences implies the existence of Wallace semigroups (a countably compact both-sided cancellative semigroup that is not algebraically a group). From the existence of a countably compact group topology without non-trivial convergent sequences in ZFC, it is natural to try to answer the following question 
due to Wallace:
 
 \begin{prob}
 Is there a Wallace semigroup  in ZFC?
 \end{prob}
 
 The known examples of Wallace semigroups are under CH \cite{robbie&svetlichny}, Martin's Axiom for countable posets \cite{tomita}, $\mathfrak c$ incomparable selective ultrafilter \cite{madariaga-garcia&tomita} and a single selective ultrafilter \cite{boero&castro&tomita2019}. Only the one in \cite{tomita} was not obtained as a semigroup of a countably compact free Abelian group without non-trivial convergent sequences.


\bibliographystyle{amsplain}
\bibliography{gruposenumeravelmentecompactos}

\end{document}